\documentclass{article}
\usepackage[utf8]{inputenc}
\usepackage[T1]{fontenc}
\usepackage{amsthm}
\usepackage{amssymb}
\usepackage{amsmath}
\usepackage{amsfonts}
\usepackage[activate={true,nocompatibility}]{microtype}
\usepackage{amstext}
\usepackage{enumitem}
\usepackage{textcomp}
\usepackage{hyphenat}
\usepackage{csquotes}
\usepackage{hyperref}
\usepackage{wasysym}
\usepackage[margin=1.5cm, bottom=2cm]{geometry}
\usepackage{faktor}

\usepackage[
    style       = alphabetic,
    sorting     = ynt,
    backend     = bibtex,
    autolang    = other]{biblatex}
\bibliography{Coarse.bib}

\newtheorem{theorem}{Theorem}[section]
\newtheorem{corollary}{Corollary}[theorem]

\newtheorem{definition}{Definition}[section]

\newtheorem{lemma}{Lemma}[section]
\newtheorem{proposition}[lemma]{Proposition}

\newtheorem{exmpl}{Example}[section]

\numberwithin{equation}{section}

\title{Bornological Metrics on Groups}
\author{A.\,A.\,Arutyunov\footnote{The results of Section~2.2 were
obtained by A.\,Arutyunov. The work of A.\,Arutyunov was supported
by the Russian Science Foundation under grant no.~25-11-00018,
\url{https://rscf.ru/en/project/25-11-00018/}, and performed at
Lomonosov Moscow State University.}
\and A.\,V.\,Perelygin}

\begin{document}
\maketitle

\section{Introduction}

Let $G$ be a group. In many mathematical contexts it is natural to
regard $G$ as a metric space, with the connection between the metric
and the algebraic structure expressed by left invariance: a metric
$\rho\colon G\times G\to\mathbb{R}$ is \emph{left-invariant} if
\[
    \rho(x,y)=\rho(gx,gy),\qquad\forall\,g,x,y\in G.
\]

In~\cite{smith2006} J.\,Smith proved that a proper left-invariant
metric is unique up to coarse equivalence, where properness means
that every ball of finite radius contains only finitely many elements
of the group.

In the present paper we give a description of left-invariant
\emph{improper} metrics, using the construction of bornological
groups introduced by R.\,Tessera and J.\,Winkel
in~\cite{Tessera2022CoarseFP}.  In the course of studying bornological
groups we also clarify some of their properties; in particular, we
establish a metrizability criterion for a bornological group.

It turns out that, from the point of view of coarse geometry, there
is no difference between the classical left-invariance condition and
its coarse analogue, namely the condition we call
\emph{bornologicity} of a metric: a metric $\rho$ is bornological if
for every $C>0$ there exists a constant $S_C>0$ such that for any
pair $x,y\in G$ with $\rho(x,y)<C$ one has
\[
    \rho(gx,gy)<S_C,\qquad\forall\,g\in G.
\]

We show that each coarse-equivalence class of bornological metrics is
determined by a bornology (see Definition~\ref{def-bornologstr}).

Metrics subject to similar conditions appear in various contexts.
Besides~\cite{smith2006}, we mention the application of this
construction to the study of uniform Roe algebras~\cite{Chung2021,
Chung2024-wc}, as well as its use in the study of asymptotic
dimension~\cite{Bell2008-dz,Bell2008-qg-1}.  In~\cite{artigue2026}
the opposite conditions are imposed on the metric, leading to the
study of expansivity metrics.

\bigskip

The main results of the paper are as follows.

\medskip

\textbf{Proposition~\ref{свойство метрики}.}
\textit{Let $\rho$ be a metric on a group $G$.  Then $\rho$ is
bornological if and only if there exists a bornology $\mathfrak{B}$
such that the left coarse structure $\mathcal{E}_L$ coincides with
the bounded coarse structure $\mathcal{E}_{\rho}$.  The bornology
$\mathfrak{B}$ consists of all sets of finite diameter.}

\medskip

Moreover, within the coarse-equivalence class of bornological metrics
there is a canonical representative, namely a left-invariant metric
on the group.

\medskip

\textbf{Theorem~\ref{th-leftinvmetric}.}
\textit{In every coarse-equivalence class of bornological metrics on
a group $G$ there exists a left-invariant metric.}

\medskip

Coarsely equivalent metrics can be identified by the classes of sets
that are bounded with respect to them.

\medskip

\textbf{Theorem~\ref{th-coarseequivstruct}.}
\textit{Let $\mathcal{E}_1$ and $\mathcal{E}_2$ be coarsely
connected, strongly $G$-invariant left coarse structures with the
same families of coarsely bounded sets,
$\mathfrak{B}(\mathcal{E}_1)=\mathfrak{B}(\mathcal{E}_2)$.  Then
the coarse spaces $(G,\mathcal{E}_1)$ and $(G,\mathcal{E}_2)$ are
coarsely equivalent.}

\medskip

It is natural to ask which bornologies arise from metrics.  The
answer takes the form natural to coarse geometry, via the notion of
countable generation.  For the precise statement we need the concept
of a strongly $G$-invariant coarse structure
(see Definition~\ref{def-g-inv}): a coarse structure $\mathcal{E}$
is strongly $G$-invariant if for every controlled set $E\in\mathcal{E}$
and every decomposition $E=\bigsqcup_{i=1}^{\infty}E_i$ with
$E_i\in\mathcal{E}$, and for every sequence $g=\{g_i\mid g_i\in G\}$,
one has
\[
    gE:=\bigcup_{i=1}^{\infty}g_i E_i\in\mathcal{E},
\]
where $g_i E_i:=\{(g_i x,\,g_i y)\mid(x,y)\in E_i\}$.

\medskip

\textbf{Theorem~\ref{th-metrizable}.}
\textit{\begin{enumerate}[label=\normalfont(\arabic*)]
    \item If the coarse structure $\mathcal{E}$ on a group $G$ is
    countably generated and strongly $G$-invariant, then
    $\mathfrak{B}(\mathcal{E})$ is a metrizable bornology.
    \item If the bornology $\mathfrak{B}(\mathcal{E})$ on a
    bornological group $G$ is metrizable, then the coarse structure
    $\mathcal{E}$ is countably generated and strongly $G$-invariant
    on the left.
\end{enumerate}}

\medskip

The connection between metrizability and the existence of
left-invariant metrics was previously discussed in~\cite{ma2021endslargescalegroups}
(see Section~7); however, a complete description of improper metric
structures in terms of bornologies appears to be new.

\medskip

In the final part of the paper we study various classes of
non-equivalent improper bornological metrics.  In particular, the
following criterion is useful for constructing such examples.

\medskip

\textbf{Theorem~\ref{th-critnonequiv}.}
\textit{Let $G$ be an infinite finitely generated group and let $d$
be a proper left-invariant metric on $G$.  If $N$ is a normal
subgroup of finite index and $\rho_{\faktor{G}{N}}$ is the word
metric on the quotient group, then $(G,d)$ is not coarsely equivalent
to $\bigl(\faktor{G}{N},\rho_{\faktor{G}{N}}\bigr)$.}

\subsection{Bornological and left-invariant metrics on groups}

We begin by fixing the definition and notation for a metric on an
abstract space~$X$.

\begin{definition}
  A map $\rho\colon X\times X\to\mathbb{R}_{\geq 0}$ is called a
  \emph{metric} if:
  \begin{enumerate}
      \item $\rho(x,y)=0\ \Leftrightarrow\ x=y$;
      \item $\rho(x,y)=\rho(y,x)$ for all $x,y$;
      \item $\rho(x,z)\leqslant\rho(x,y)+\rho(y,z)$ for all $x,y,z$.
  \end{enumerate}
\end{definition}

We are primarily interested in metrics on groups, which are
conveniently defined via norms. Let $G$ be a countable group; we
denote its identity element by~$e_G$.

\begin{definition}\label{def-bounded-set}
  A map $d\colon G\to[0,+\infty)$ is called a \emph{norm} on the
  group~$G$ if:
  \begin{itemize}
      \item $d(g)=0$ if and only if $g=e_G$;
      \item $d(g^{-1})=d(g)$ for every $g\in G$;
      \item $d(gh)\leq d(g)+d(h)$ for all $g,h\in G$.
  \end{itemize}
\end{definition}

A norm $\|\cdot\|$ induces a metric $\rho_{\|\cdot\|}$ on $G$ by
\[
    \rho_{\|\cdot\|}(g,h):=\|g^{-1}h\|.
\]
The induced metric is left-invariant: $\rho(ag,ah)=\rho(g,h)$ for
every $a\in G$.

\bigskip

We also give the definition of a proper norm.

\begin{definition}
  A norm $d$ on $G$ is called \emph{proper} if for every $R>0$ there
  are only finitely many $g\in G$ with $d(g)\leq R$. A metric induced
  by a proper norm is called proper.
\end{definition}

A finitely generated group is naturally equipped with a \emph{word
norm} and the induced metric with respect to a fixed generating set
$\mathcal{X}$. The word norm $\|g\|$ of an element $g\in G$ is
defined as the minimal length of a word in the generators
$\mathcal{X}$ representing~$g$. This metric is both proper and
left-invariant.

A more general statement for countable (not necessarily finitely
generated) groups is proved in~\cite[Proposition~1]{smith2006}. The
notion of coarse equivalence is defined below
(see Definition~\ref{def-coarse-equiv-metrics}).

\begin{theorem}[J.\,Smith]\label{Smith}
  On a countable group, any two proper left-invariant metrics are
  coarsely equivalent.
\end{theorem}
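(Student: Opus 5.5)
We will show that the identity map $\mathrm{id}\colon(G,\rho_1)\to(G,\rho_2)$ is a coarse equivalence for any two proper left-invariant metrics $\rho_1,\rho_2$ on the countable group $G$. Write $\|g\|_i:=\rho_i(e_G,g)$ for the associated norms. By left invariance, $\rho_i(g,h)=\|g^{-1}h\|_i$. Hence it suffices to compare the two norms. Concretely, we want non-decreasing functions $\varphi_\pm\colon[0,\infty)\to[0,\infty)$ with $\varphi_-(t)\to\infty$ as $t\to\infty$ such that
\[
    \varphi_-(\|g\|_1)\le\|g\|_2\le\varphi_+(\|g\|_1)\qquad\forall\,g\in G.
\]
Applying these inequalities to $g^{-1}h$ yields the same control between $\rho_1(g,h)$ and $\rho_2(g,h)$. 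This is precisely a coarse equivalence: the identity is a bornologous map with bornologous inverse, and it is trivially surjective.

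\textbf{Upper control.} For $R\ge0$, let $B_1(R)=\{g:\|g\|_1\le R\}$. Properness of $\|\cdot\|_1$ makes $B_1(R)$ finite, so we may set
\[
    \varphi_+(R):=\max\{\|g\|_2 : g\in B_1(R)\}<\infty.
\]
This function is non-decreasing in $R$, and $\|g\|_2\le\varphi_+(\|g\|_1)$ holds by construction. Thus $\rho_2(g,h)\le\varphi_+(\rho_1(g,h))$, so the identity $(G,\rho_1)\to(G,\rho_2)$ is bornologous.

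\textbf{Lower control.} Exchanging the roles of the two metrics, define $\psi(R):=\max\{\|g\|_1:\|g\|_2\le R\}$, which is finite by properness of $\|\cdot\|_2$. Then $\|g\|_1\le\psi(\|g\|_2)$, so the inverse identity $(G,\rho_2)\to(G,\rho_1)$ is bornologous as well. Equivalently, set $\varphi_-(t):=\inf\{\|g\|_2:\|g\|_1\ge t\}$. This function tends to infinity: only finitely many $g$ have $\|g\|_2\le R$, so for $t$ larger than all of their $\|\cdot\|_1$-norms, every $g$ with $\|g\|_1\ge t$ satisfies $\|g\|_2>R$. This gives the lower bound in the display above.

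\textbf{Main point.} The only substantive ingredient is that properness turns balls into finite sets, so that the maxima above are finite. Left invariance is what reduces the comparison of metrics to the comparison of norms at the identity; the countability of $G$ is needed only to guarantee that a proper left-invariant metric exists at all. We therefore expect no genuine obstacle. The one point requiring care is matching the definition of coarse equivalence in Definition~\ref{def-coarse-equiv-metrics}. If that definition asks for maps $f,f'$ with $f\circ f'$ and $f'\circ f$ close to the identity, we take $f=f'=\mathrm{id}$. If it instead uses control functions, the functions $\varphi_\pm$ (or $\varphi_+$ and $\psi$) serve directly.
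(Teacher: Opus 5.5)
Your argument is correct: finiteness of the balls $B_1(R)$ and $\{g:\|g\|_2\le R\}$ gives the control functions $\varphi_+$ and $\psi$, left invariance transfers them from norms to metrics, and properness of the identity in the paper's sense is immediate. Indeed, if $B$ is $\rho_2$-bounded then $B\times B\in\mathcal{E}_{\rho_2}$, and bornologousness of the inverse identity gives $B\times B\in\mathcal{E}_{\rho_1}$. The paper gives no proof of its own and only cites Smith's Proposition~1, whose standard argument is exactly this one: it shows that the identity map is a coarse equivalence.
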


We now drop the requirement that metrics (and norms) be proper and
study all left-invariant metrics. The left-invariance condition itself
can also be relaxed.

\begin{definition}
  A metric $\rho$ on a group $G$ is called \emph{left bornological}
  if for every $C>0$ there exists a constant $S_C>0$ such that for
  any pair $x,y\in G$ with $\rho(x,y)<C$ one has
  \[
      \rho(gx,gy)<S_C,\qquad\forall\,g\in G.
  \]
\end{definition}

One can analogously define a right bornological metric, but we do not
consider this notion here. Accordingly, we henceforth refer to left
bornological metrics simply as bornological metrics.

\subsection{Coarse structures}

To define coarse equivalence of metrics we recall the standard
definitions from coarse geometry that we shall use. As in most other
standard definitions in coarse geometry, we follow the terminology
of~\cite{Roe2003LecturesOC}.

\begin{definition}
  A pair $(X,\mathcal{E})$ is called a \emph{coarse space}, and the
  family of subsets $\mathcal{E}\subset 2^{X\times X}$ is called a
  \emph{coarse structure} on~$X$, if $\mathcal{E}$ satisfies the
  following conditions:
  \begin{enumerate}
      \item If $E\in\mathcal{E}$ and $E_1\subset E$, then
            $E_1\in\mathcal{E}$.
      \item If $E_1,\dots,E_n\in\mathcal{E}$, then
            $\bigcup_{i=1}^{n}E_i\in\mathcal{E}$.
      \item If $E\in\mathcal{E}$, then $E^{-1}\in\mathcal{E}$, where
            \[
                E^{-1}=\{(y,x)\mid(x,y)\in E\}.
            \]
      \item If $E_1,E_2\in\mathcal{E}$, then
            $E_1\circ E_2\in\mathcal{E}$, where
            \[
                E_1\circ E_2=\{(x,z)\mid\exists\,y\colon
                (x,y)\in E_1\text{ and }(y,z)\in E_2\}.
            \]
      \item $\operatorname{diag}_{X\times X}=\{(x,x)\mid x\in X\}
            \in\mathcal{E}$.
  \end{enumerate}
\end{definition}

The central example of a coarse structure is the bounded coarse
structure generated by a metric space $(X,\rho)$.

\begin{definition}\label{def-boundedcoarsestructurt}
  The \emph{bounded coarse structure} $\mathcal{E}_\rho$ on a metric
  space $(X,\rho)$ is the family of subsets of $X\times X$ defined by
  \begin{equation}
      E\in\mathcal{E}_\rho\ \Leftrightarrow\
      \bigl\{\exists\,C>0\colon\forall\,(x,x')\in E,\quad
      \rho(x,x')<C\bigr\}.
  \end{equation}
\end{definition}

That $\mathcal{E}_\rho$ is indeed a coarse structure follows directly
from the definitions.

\bigskip

Closely related to the notion of a coarse structure is the notion of
a coarsely bounded set.

\begin{definition}
  Let $(X,\mathcal{E})$ be a coarse space. A subset $B\subset X$ is
  called \emph{coarsely bounded} if $B\times B\in\mathcal{E}$. We
  denote the family of coarsely bounded sets by~$\mathfrak{B}(\mathcal{E})$.
\end{definition}

For the bounded coarse structure of
Definition~\ref{def-boundedcoarsestructurt}, coarse boundedness
coincides with ordinary metric boundedness.

We record the following lemma for completeness; it will be used later.

\begin{lemma}\label{lemma-bounded}
  The following conditions for a coarse space $(X,\mathcal{E})$ are
  equivalent:
  \begin{enumerate}
      \item $B\times B\in\mathcal{E}$;
      \item there exists a point $x\in X$ such that
            $B\times\{x\}\in\mathcal{E}$.
  \end{enumerate}
\end{lemma}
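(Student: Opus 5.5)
The plan is to verify both implications directly from the axioms of a coarse structure, using only closure under subsets, inverses and composition. The degenerate case $B=\emptyset$ is handled separately first. In that case $B\times B=\emptyset$ and $B\times\{x\}=\emptyset$ for every $x\in X$. The empty set is a subset of $\operatorname{diag}_{X\times X}\in\mathcal{E}$, so both conditions hold, provided $X$ is nonempty so that some point $x$ exists. From now on I assume $B\neq\emptyset$.

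For (1)$\Rightarrow$(2), I pick any point $x\in B$. Then $B\times\{x\}\subset B\times B$. Since $B\times B\in\mathcal{E}$, axiom~1 (closure under subsets) gives $B\times\{x\}\in\mathcal{E}$.

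For (2)$\Rightarrow$(1), fix $x$ with $B\times\{x\}\in\mathcal{E}$. By axiom~3, $(B\times\{x\})^{-1}=\{x\}\times B\in\mathcal{E}$. By axiom~4 the composition $(B\times\{x\})\circ(\{x\}\times B)$ lies in $\mathcal{E}$. It remains to identify this composition with $B\times B$. A pair $(b,b')$ belongs to it exactly when there is a $y$ with $(b,y)\in B\times\{x\}$ and $(y,b')\in\{x\}\times B$. This forces $y=x$, $b\in B$ and $b'\in B$. Conversely, every $(b,b')\in B\times B$ arises this way by taking $y=x$. Hence the composition equals $B\times B$, and so $B\times B\in\mathcal{E}$.

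There is no real obstacle here. The only points needing care are the empty-set case and checking that the composition is exactly $B\times B$ rather than merely containing it. In fact containment $B\times B\subset(B\times\{x\})\circ(\{x\}\times B)$ would already suffice, by axiom~1.
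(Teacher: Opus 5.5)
Your proof is correct and follows the paper's argument: (1)$\Rightarrow$(2) uses closure under subsets with a point $x\in B$, and (2)$\Rightarrow$(1) composes $B\times\{x\}$ with its inverse $\{x\}\times B$. Your separate treatment of $B=\emptyset$ (where the paper's choice of $x\in B$ is unavailable) and your explicit check that the composition equals $B\times B$ add rigor the paper's version omits.
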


\begin{proof}
\begin{enumerate}
    \item By the first axiom of a coarse structure, $B\times
    B\in\mathcal{E}$ implies $B\times\{x\}\in\mathcal{E}$ for every
    $x\in B$.
    \item Suppose $B\times\{x\}\in\mathcal{E}$. By the axioms of a
    coarse structure, $\{x\}\times B=(B\times\{x\})^{-1}\in\mathcal{E}$.
    Applying the composition operation to $B\times\{x\}$ and
    $\{x\}\times B$ gives $B\times B\in\mathcal{E}$.\qedhere
\end{enumerate}
\end{proof}

\bigskip

We now give the general definition of coarse equivalence. Let $X$
and~$Y$ be coarse spaces.

\begin{definition}
  A map $f\colon X\to Y$ is called \emph{proper} if for every
  coarsely bounded set $B\subset Y$ the preimage $f^{-1}(B)$ is also
  bounded.
\end{definition}

\begin{definition}
  A map $f\colon X\to Y$ is called \emph{bornologous} if for every
  controlled set~$E$ the set $(f\times f)(E)$ is also controlled.
\end{definition}

\begin{definition}
  A map $f\colon X\to Y$ is called a \emph{coarse map} if it is both
  proper and bornologous.
\end{definition}

Let $M$ be an arbitrary set (not necessarily equipped with a coarse
structure) and let $(X,\mathcal{E})$ be a coarse space.

\begin{definition}
  Two maps $f,f'\colon M\to X$ are called \emph{close} if the set
  \[
      \{(f(m),f'(m))\mid m\in M\}
  \]
  is controlled.
\end{definition}

When $X$ is a metric space with the bounded coarse structure
(Definition~\ref{def-boundedcoarsestructurt}), closeness is
equivalent to the existence of a uniform bound on the distances
between images: for some constant $C\geq 0$,
\[
    \rho(f(m),f'(m))\leq C.
\]

We can now give the definition of coarse equivalence.

\begin{definition}\label{def-coarse-equiv}
  Coarse spaces $X$ and $Y$ are \emph{coarsely equivalent}
  $(X\stackrel{c}{\sim}Y)$ if there exist coarse maps $f\colon X\to Y$
  and $g\colon Y\to X$ such that $f\circ g$ is close to
  $\operatorname{id}_Y$ and $g\circ f$ is close to
  $\operatorname{id}_X$.
\end{definition}

For metric spaces this specializes as follows.

\begin{definition}\label{def-coarse-equiv-metrics}
  Two metrics $\rho_1$ and $\rho_2$ on a group $G$ are called
  \emph{coarsely equivalent} if the coarse space $(G,\mathcal{E}_{\rho_1})$
  is coarsely equivalent to $(G,\mathcal{E}_{\rho_2})$.
\end{definition}

A natural example of coarsely equivalent metrics is provided by
proportional metrics; however, coarsely equivalent metrics need not
be proportional.

\section{Bornological groups}

We follow the definitions of~\cite{Tessera2022CoarseFP}. Let $G$ be
a group; we denote by $2^G$ the set of all subsets of~$G$.

\begin{definition}\label{def-bornologstr}
  A \emph{bornology} on $G$ is a family of subsets
  $\mathfrak{B}\subset 2^G$ satisfying the following axioms:
  \begin{enumerate}
      \item For every $g\in G$ the singleton $\{g\}$ belongs to
            $\mathfrak{B}$.
      \item If $A\subset B\subset G$ and $B\in\mathfrak{B}$, then
            $A\in\mathfrak{B}$.
      \item If $A\in\mathfrak{B}$, then $A^{-1}\in\mathfrak{B}$.
      \item If $A,B\in\mathfrak{B}$, then $A\cup B\in\mathfrak{B}$
            and $A\cdot B\in\mathfrak{B}$.
  \end{enumerate}
\end{definition}

A group equipped with a bornology is naturally called a
\emph{bornological group}. The elements of the bornology, as will
become clear below, are naturally called \emph{bounded sets}.

\begin{definition}
  The pair $(G,\mathfrak{B})$ is called a \emph{bornological group}.
  A set $A\in\mathfrak{B}$ is called \emph{bornologically bounded}
  (or simply \emph{bounded} when no confusion can arise).
\end{definition}

\begin{exmpl}
  Let $G$ be a group. The family of all finite subsets of $G$ forms a
  bornology~$\mathfrak{B}_{\min}$.
\end{exmpl}

In this bornology the bounded sets are precisely the finite sets. At
the same time, it is easy to see from the axioms that finite subsets
belong to every bornology, so $\mathfrak{B}_{\min}$ is the minimal
bornological structure on~$G$.

The maximal bornology is defined analogously.

\begin{exmpl}\label{ex-Bfull}
  The family of all subsets $\mathfrak{B}_{\mathrm{full}}:=2^G$
  forms a bornology. In this bornology every subset of $G$ is
  bounded.
\end{exmpl}

\subsection{Left and right bornological coarse structures}

A bornology $\mathfrak{B}$ naturally gives rise to two coarse
structures, which we now define.

\begin{definition}
  \begin{enumerate}
      \item The \emph{left bornological coarse structure} is defined
            by
            \begin{equation}
                E\in\mathcal{E}_L\ \Leftrightarrow\
                \{x^{-1}y\mid(x,y)\in E\}\in\mathfrak{B}.
            \end{equation}
      \item The \emph{right bornological coarse structure} is defined
            by
            \begin{equation}
                E\in\mathcal{E}_R\ \Leftrightarrow\
                \{xy^{-1}\mid(x,y)\in E\}\in\mathfrak{B}.
            \end{equation}
  \end{enumerate}
\end{definition}

The family of coarsely bounded sets for both the left and right
bornological coarse structures coincides with the bornology.

\begin{proposition}
  For $\mathcal{E}_L$ (respectively $\mathcal{E}_R$) one has
  $\mathfrak{B}(\mathcal{E}_L)=\mathfrak{B}$, that is, the coarsely
  bounded sets coincide with the bornologically bounded sets.
\end{proposition}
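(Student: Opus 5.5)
We prove the inclusion $\mathfrak{B}(\mathcal{E}_L)\subset\mathfrak{B}$ and its converse directly from the definitions. The right structure $\mathcal{E}_R$ is handled by the same argument with $x^{-1}y$ replaced by $xy^{-1}$. We use Lemma~\ref{lemma-bounded} only as a convenient intermediate form of coarse boundedness. We do not need to verify separately that $\mathcal{E}_L$ is a coarse structure here: closure under subsets, finite unions, inverses and composition follows from the bornology axioms via the identities
\[
(x^{-1}y)^{-1}=y^{-1}x,\qquad x^{-1}z=(x^{-1}y)(y^{-1}z).
\]

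\emph{Step 1: $\mathfrak{B}\subset\mathfrak{B}(\mathcal{E}_L)$.} Let $B\in\mathfrak{B}$. Then
\[
\{x^{-1}y\mid (x,y)\in B\times B\}=B^{-1}B .
\]
Axiom~3 of Definition~\ref{def-bornologstr} gives $B^{-1}\in\mathfrak{B}$, and axiom~4 gives $B^{-1}B\in\mathfrak{B}$. Hence $B\times B\in\mathcal{E}_L$, i.e.\ $B$ is coarsely bounded. For $\mathcal{E}_R$ the relevant set is $BB^{-1}$, and the same axioms apply.

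\emph{Step 2: $\mathfrak{B}(\mathcal{E}_L)\subset\mathfrak{B}$.} Suppose $B\times B\in\mathcal{E}_L$, and assume $B\neq\emptyset$ (the empty set lies in $\mathfrak{B}$ by axioms~1 and~2). Fix $b_0\in B$. By axiom~1 of a coarse structure, $\{b_0\}\times B\subset B\times B$ lies in $\mathcal{E}_L$, so
\[
\{b_0^{-1}y\mid y\in B\}=b_0^{-1}B\in\mathfrak{B}.
\]
Axiom~1 of a bornology gives $\{b_0\}\in\mathfrak{B}$, and then axiom~4 gives
\[
B=\{b_0\}\cdot(b_0^{-1}B)\in\mathfrak{B}.
\]
For $\mathcal{E}_R$ we use $B\times\{b_0\}$ instead, which yields $Bb_0^{-1}\in\mathfrak{B}$, and then $B=(Bb_0^{-1})\cdot\{b_0\}\in\mathfrak{B}$.

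The only point requiring any care is Step~2. There we must recover $B$ itself from the difference set $B^{-1}B$ (or $BB^{-1}$). Translating back by a single bounded element, using the singleton axiom together with closure under products, does exactly this. The rest is a direct check of the axioms.
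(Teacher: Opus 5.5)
Your proof is correct and is essentially the paper's argument: your Step~2 is the paper's converse (take the slice $\{b_0\}\times B$ of $B\times B$ and translate back by a bounded singleton, just as the paper uses $B\subset gA^{-1}$), and your Step~1 differs only cosmetically by computing $B^{-1}B$ directly rather than checking $B\times\{g\}\in\mathcal{E}_L$ via $B^{-1}g\in\mathfrak{B}$ and Lemma~\ref{lemma-bounded}. Consequently you never actually use Lemma~\ref{lemma-bounded}, despite your opening remark, and your explicit treatment of $\mathcal{E}_R$ fills in the case the paper covers only by ``respectively''.
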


\begin{proof}
  Suppose $B\in\mathfrak{B}$. By Lemma~\ref{lemma-bounded} it
  suffices to show that $B\times\{g\}\in\mathcal{E}_L$ for some
  $g\in G$. Indeed, $\{x^{-1}g\mid x\in B\}=B^{-1}\cdot g\in\mathfrak{B}$
  by the axioms of a bornology, so $B\times\{g\}\in\mathcal{E}_L$ by
  definition. Hence $\mathfrak{B}\subset\mathfrak{B}(\mathcal{E}_L)$.

  Conversely, suppose $B\in\mathfrak{B}(\mathcal{E}_L)$. Then there
  exists $E\in\mathcal{E}_L$ such that $B=E_{\{g\}}:=\{x\mid
  (x,g)\in E\}$ for some $g\in G$. Since $E\in\mathcal{E}_L$, the
  set $A=\{x^{-1}y\mid(x,y)\in E\}\in\mathfrak{B}$. By definition
  $B\subset\{g\}\cdot A^{-1}\in\mathfrak{B}$, so
  $\mathfrak{B}(\mathcal{E}_L)\subset\mathfrak{B}$.
\end{proof}

As expected, the coarse spaces $(G,\mathcal{E}_L)$ and
$(G,\mathcal{E}_R)$ are coarsely equivalent.

\begin{proposition}\label{правое и левое}
  Let $G$ be a group with a bornology. Then
  \[
      (G,\mathfrak{B},\mathcal{E}_L)\stackrel{c}{\sim}
      (G,\mathfrak{B},\mathcal{E}_R).
  \]
\end{proposition}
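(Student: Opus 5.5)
The natural candidate for the coarse equivalence is the inversion map $\iota\colon G\to G$, $\iota(x)=x^{-1}$. It is an involution, $\iota\circ\iota=\operatorname{id}_G$, so both compositions required in Definition~\ref{def-coarse-equiv} are literally the identity and closeness is automatic; indeed the diagonal lies in every coarse structure. It therefore suffices to check that $\iota\colon(G,\mathcal{E}_L)\to(G,\mathcal{E}_R)$ and $\iota\colon(G,\mathcal{E}_R)\to(G,\mathcal{E}_L)$ are both coarse maps, that is, bornologous and proper.

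For bornologousness, take $E\in\mathcal{E}_L$, so that $A=\{x^{-1}y\mid(x,y)\in E\}\in\mathfrak{B}$. The image is $(\iota\times\iota)(E)=\{(x^{-1},y^{-1})\mid(x,y)\in E\}$. Its right difference set is
\[
\{x^{-1}(y^{-1})^{-1}\mid(x,y)\in E\}=\{x^{-1}y\mid(x,y)\in E\}=A\in\mathfrak{B},
\]
so the image lies in $\mathcal{E}_R$. The reverse direction is symmetric. Starting from $E\in\mathcal{E}_R$ with $\{xy^{-1}\}\in\mathfrak{B}$, the left difference set of the image is
\[
\{(x^{-1})^{-1}y^{-1}\mid(x,y)\in E\}=\{xy^{-1}\mid(x,y)\in E\}\in\mathfrak{B}.
\]

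For properness, I will use the preceding Proposition: the coarsely bounded sets of both $\mathcal{E}_L$ and $\mathcal{E}_R$ are exactly $\mathfrak{B}$. The preimage of $B$ under $\iota$ is $B^{-1}$, and $B^{-1}\in\mathfrak{B}$ by the third axiom of a bornology, so $\iota$ is proper in both directions.

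The only point requiring some care is the $\mathcal{E}_R$ half of the preceding Proposition, which was stated "respectively" but whose proof was written only for $\mathcal{E}_L$. If needed, I would verify it directly. For $B\in\mathfrak{B}$, the set $B\times\{g\}$ has right difference set $Bg^{-1}\in\mathfrak{B}$. Conversely, if $E\in\mathcal{E}_R$ and $B=\{x\mid(x,g)\in E\}$, then $B\subset A\cdot g$ where $A$ is the right difference set of $E$, so $B\in\mathfrak{B}$. Lemma~\ref{lemma-bounded} then finishes the argument. Everything else is direct computation, so I do not expect a genuine obstacle.
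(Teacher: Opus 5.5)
Your proof is correct and follows the paper's argument. It uses the inversion map $x\mapsto x^{-1}$, notes that this map carries the left difference set of $E$ unchanged to the right difference set of its image, gets properness from closure of $\mathfrak{B}$ under inverses, and gets closeness from $\iota\circ\iota=\operatorname{id}_G$. You also spell out two points the paper leaves implicit: the $\mathcal{E}_R\to\mathcal{E}_L$ direction, and the verification that $\mathfrak{B}(\mathcal{E}_R)=\mathfrak{B}$.
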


\begin{proof}
  Consider the map $\theta\colon x\mapsto x^{-1}$. We show that it
  is a coarse equivalence.
  \begin{itemize}
      \item \emph{Properness.} The preimage of a bounded set is
            bounded, since the bornology is closed under taking
            inverses.
      \item \emph{Bornologicity.} Let $E=\{(x,y)\}\in\mathcal{E}_L$,
            so that $A_E=\{x^{-1}y\mid(x,y)\in E\}\in\mathfrak{B}$.
            Then $(\theta\times\theta)(E)=\{(x^{-1},y^{-1})\mid
            (x,y)\in E\}$. Since
            $A_{(\theta\times\theta)(E)}=\{x^{-1}y\}=A_E\in\mathfrak{B}$,
            we have $(\theta\times\theta)(E)\in\mathcal{E}_R$.
      \item Since $\theta$ is a bijection, the compositions
            $\theta\circ\theta^{-1}$ and $\theta^{-1}\circ\theta$
            are equal (and hence close) to the identity.\qedhere
  \end{itemize}
\end{proof}

\subsection{Coarse structures associated with a bornology}

We define a class of coarse structures compatible with the group
structure.

\begin{definition}\label{def-g-inv}
  A coarse structure $\mathcal{E}$ is called \emph{strongly
  $G$-invariant on the left} if for every $E\in\mathcal{E}$, every
  decomposition $E=\bigsqcup_{i\in I}E_i$ with $E_i\in\mathcal{E}$
  (where $I$ is an index set and some $E_i$ may be empty), and every
  family $\bar{g}=\{g_h\mid h\in G\}$, the set
  \[
      \bar{g}E:=\bigcup_{h\in G}g_h E_h
  \]
  is controlled, where $g_h E_h:=\{(g_h x,g_h y)\mid(x,y)\in E_h\}$.
\end{definition}

\begin{exmpl}
  The left bornological coarse structure $\mathcal{E}_L$ is strongly
  $G$-invariant on the left.
\end{exmpl}

\begin{definition}
  A coarse structure $\mathcal{E}$ on a space $X$ is called
  \emph{coarsely connected} if for any $x,x'\in X$ there exists a
  controlled set $E\in\mathcal{E}$ such that $(x,x')\in E$.
\end{definition}

Coarse connectedness can be informally understood as the
commensurability of any two points. For the bounded coarse structure
generated by a metric, coarse connectedness is equivalent to ordinary
metric connectedness.

\begin{proposition}\label{главное предложние}
  If $\mathcal{E}$ is strongly $G$-invariant on the left and coarsely
  connected, then the family
  \[
      \mathfrak{B}:=\bigl\{B_E=\{x^{-1}y\mid(x,y)\in E\}
      \mid E\in\mathcal{E}\bigr\}
  \]
  is a bornology and $\mathcal{E}=\mathcal{E}_L$.
\end{proposition}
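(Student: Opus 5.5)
The plan has two halves. First we check that $\mathfrak{B}$ is a bornology. Then we prove the double inclusion $\mathcal{E}\subset\mathcal{E}_L$ and $\mathcal{E}_L\subset\mathcal{E}$. The basic tool is the following observation, obtained from Definition~\ref{def-g-inv} by decomposing $E$ into singletons. Write $E=\bigsqcup_{(x,y)\in E}\{(x,y)\}$; each singleton is controlled, being a subset of $E$. Then for any choice of group elements $g_{(x,y)}$, the set $\{(g_{(x,y)}x,\,g_{(x,y)}y)\}$ is controlled.

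The indexing in the definition is by $h\in G$, so we must use an injection of $E$ into $G$ or group the pairs suitably. Since $G$ is countable and $E\subset G\times G$ is countable, either $E$ injects into $G$, or (if $G$ is finite) everything is trivial. We take $g_{(x,y)}=x^{-1}$ and obtain
\[
\{e_G\}\times\{x^{-1}y\mid (x,y)\in E\}=\{(e_G,b)\mid b\in B_E\}\in\mathcal{E}.
\]
Hence $B_E\in\mathfrak{B}(\mathcal{E})$ by Lemma~\ref{lemma-bounded}. Conversely, if $\{e_G\}\times B\in\mathcal{E}$, then $B=B_{\{e_G\}\times B}\in\mathfrak{B}$. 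So $\mathfrak{B}$ is exactly the family of coarsely bounded sets of $\mathcal{E}$, which is the characterization we will work with throughout.

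\textbf{Bornology axioms.} Singletons: by coarse connectedness, $(e_G,g)$ lies in some controlled set, so $\{(e_G,g)\}\in\mathcal{E}$ and $\{g\}=B_{\{(e_G,g)\}}$. Subsets: if $A\subset B_E$, pick for each $a\in A$ one pair $(x,y)\in E$ with $x^{-1}y=a$; the resulting subset $E'\subset E$ is controlled and satisfies $B_{E'}=A$. Inverses: $B_{E^{-1}}=B_E^{-1}$. Unions: $B_{E_1\cup E_2}=B_{E_1}\cup B_{E_2}$. Products: for $A,B\in\mathfrak{B}$ we have $\{e_G\}\times A\in\mathcal{E}$ and $\{e_G\}\times B\in\mathcal{E}$. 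Translating the second by the elements $a\in A$, via the strong invariance observation applied to the controlled set $\{e_G\}\times B$ duplicated over $A$, is awkward, because the decomposition must be of a single set. The cleaner route is:
\[
A\times\{e_G\}\in\mathcal{E},\qquad \text{and}\quad \{(a,ab)\mid a\in A,\,b\in B\}\in\mathcal{E}.
\]
The second set has all differences $x^{-1}y$ in $B$. Composing gives $\{(e_G,ab)\}\in\mathcal{E}$, hence $AB\in\mathfrak{B}$.

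\textbf{Double inclusion.} If $E\in\mathcal{E}$, then $B_E\in\mathfrak{B}$, so $E\in\mathcal{E}_L$ by definition. Conversely, suppose $E\in\mathcal{E}_L$, i.e.\ $B_E\in\mathfrak{B}$. Then $\{e_G\}\times B_E\in\mathcal{E}$, and we want to translate back: $E=\{(x\cdot e_G,\,x\cdot x^{-1}y)\}$. This is the main obstacle, and it also underlies the product step above. A single pair $(e_G,b)$ may need to be translated by many different $x$, while strong invariance only lets us translate each piece of a \emph{disjoint decomposition} by one element.

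To handle this, first enumerate $E=\{(x_n,y_n)\}$, which is possible since $G$ is countable. Then build the auxiliary controlled set
\[
F=\{(k_n,\,k_n b_n)\},\qquad b_n=x_n^{-1}y_n,
\]
with the points $k_n$ distinct. To get $F$ controlled, use coarse connectedness and strong invariance: translate the singletons $\{(e_G,b_n)\}$ by $k_n$. This still requires a single controlled set containing all $(e_G,b_n)$, namely $\{e_G\}\times B_E$, which repeats pairs. We resolve the repetition by grouping: for each $b\in B_E$, the pairs with $b_n=b$ form a family that we realize as a disjoint union of copies. Concretely, we apply the invariance principle countably many times, using finite unions and a diagonal argument.

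I expect the honest resolution to be the following. Write $E=\bigsqcup_b E^{(b)}$, where $E^{(b)}=\{(x,xb)\}$. Each $E^{(b)}$ is the orbit of one pair, and orbits of a single pair should be controlled; this is obtained by decomposing $\operatorname{diag}\circ$-type sets. We then verify that this choice indeed lands in $\mathcal{E}$. This last step is where I would concentrate effort.
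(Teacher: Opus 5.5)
Your reduction $\{e_G\}\times B_E\in\mathcal{E}$ is fine: you translate each singleton of $E$ by $x^{-1}$. The singleton, subset, inverse and union axioms are also fine. However, the proof has a genuine gap exactly where you suspected, and the same gap affects your product step. There you assert $\{(a,ab)\mid a\in A,\ b\in B\}\in\mathcal{E}$ only because its differences lie in $B$. That is the inclusion $\mathcal{E}_L\subset\mathcal{E}$, which you have not proved. You also compose on the wrong side: it is $\{e_G\}\times A$, not $A\times\{e_G\}$, that composes with this set to give $\{e_G\}\times AB$.

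The product axiom needs no translations at all. Since $A^{-1}\in\mathfrak{B}$, both $A^{-1}\times\{e_G\}$ and $\{e_G\}\times B$ lie in $\mathcal{E}$. Hence $A^{-1}\times B=(A^{-1}\times\{e_G\})\circ(\{e_G\}\times B)\in\mathcal{E}$, and $AB=B_{A^{-1}\times B}\in\mathfrak{B}$. This is what the paper's composition $E\circ E'$ through $e$ is doing.

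The inclusion $\mathcal{E}_L\subset\mathcal{E}$, however, cannot be derived from Definition~\ref{def-g-inv} as written. No amount of work on your ``orbit of a single pair'' step will close it. Let $\mathcal{E}_{\mathrm{fin}}$ be the family of $E\subset G\times G$ with $E\setminus\operatorname{diag}_{G\times G}$ finite.
\begin{itemize}
\item It is a coarse structure and it is coarsely connected.
\item It satisfies Definition~\ref{def-g-inv}. Translating the pieces of a disjoint decomposition sends diagonal pairs to diagonal pairs, and it produces at most as many off-diagonal pairs as $E$ had.
\item Its family $\mathfrak{B}$ consists of the finite sets. So for $b\neq e_G$ and $G$ infinite, the orbit $\{(x,xb)\mid x\in G\}$ lies in $\mathcal{E}_L$ but not in $\mathcal{E}_{\mathrm{fin}}$.
\end{itemize}
Thus the inclusion, and with it the proposition, fails under the hypotheses as literally stated.

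The paper's proof simply calls this inclusion ``clear''. What actually makes it work is a stronger, uniform invariance: $\bigcup_{g\in G}gE\in\mathcal{E}$ for every $E\in\mathcal{E}$. This is the uniform bornologicity of the left action that the paper invokes in the proof of Theorem~\ref{th-coarseequivstruct}. Under that hypothesis the missing step takes one line. If $E\in\mathcal{E}_L$, then $B_E=B_{E'}$ for some $E'\in\mathcal{E}$, so $\{e_G\}\times B_E\in\mathcal{E}$ by your reduction. Then $E\subset\bigcup_{g\in G}g\bigl(\{e_G\}\times B_E\bigr)\in\mathcal{E}$.
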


\begin{proof}
  We verify the axioms of a bornology.
  \begin{enumerate}
      \item Singletons belong to $\mathfrak{B}$.
      \item Closure under taking subsets, finite unions, and inverses
            follows from the axioms of a coarse structure.
      \item Closure under products follows from strong
            $G$-invariance. Let
            \[
                A=\{x^{-1}y\mid(x,y)\in E_A\},\qquad
                B=\{a^{-1}b\mid(a,b)\in E_B\}.
            \]
            Then
            \[
                A\cdot B=\{x^{-1}ya^{-1}b\mid(x,y)\in E_A,\,
                (a,b)\in E_B\}.
            \]
            Using strong $G$-invariance applied to $E_A$, define the
            controlled sets
            \[
                E:=\{(xy^{-1},e)\},\qquad E':=\{(e,a^{-1}b)\}.
            \]
            Applying Lemma~\ref{lemma-bounded} to $E\circ E'$ gives
            the required result.
  \end{enumerate}
  The inclusion $\mathcal{E}_L\subset\mathcal{E}$ is clear. The
  construction of the bornology gives the reverse inclusion
  $\mathcal{E}\subset\mathcal{E}_L$.
\end{proof}

\begin{corollary}\label{следствие про сильную g-инвариантность}
  If $\mathcal{E}$ is strongly $G$-invariant on the left and coarsely
  connected, then $\mathfrak{B}(\mathcal{E})$ is a bornology
  consisting of the sets described in
  Proposition~\ref{главное предложние}.
\end{corollary}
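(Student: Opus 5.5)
The corollary asserts two things: $\mathfrak{B}(\mathcal{E})$ is a bornology, and it equals the family $\mathfrak{B}:=\{B_E\mid E\in\mathcal{E}\}$ of Proposition~\ref{главное предложние}. The plan is to use that proposition, which already gives that $\mathfrak{B}$ is a bornology and that $\mathcal{E}=\mathcal{E}_L$, where $\mathcal{E}_L$ is the left bornological coarse structure built from $\mathfrak{B}$. Then we apply the proposition computing coarsely bounded sets of $\mathcal{E}_L$, namely $\mathfrak{B}(\mathcal{E}_L)=\mathfrak{B}$. Chaining these gives
\[
    \mathfrak{B}(\mathcal{E})=\mathfrak{B}(\mathcal{E}_L)=\mathfrak{B}.
\]
This shows both that $\mathfrak{B}(\mathcal{E})$ is a bornology and that its members are exactly the sets $B_E$.

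In order, the steps are as follows. First, invoke Proposition~\ref{главное предложние} under the given hypotheses (strong left $G$-invariance and coarse connectedness) to obtain the bornology $\mathfrak{B}$ and the equality $\mathcal{E}=\mathcal{E}_L$. Second, note that coarse boundedness depends only on the coarse structure: $B\in\mathfrak{B}(\mathcal{E})$ iff $B\times B\in\mathcal{E}=\mathcal{E}_L$, so $\mathfrak{B}(\mathcal{E})=\mathfrak{B}(\mathcal{E}_L)$. Third, apply the earlier proposition $\mathfrak{B}(\mathcal{E}_L)=\mathfrak{B}$. That proposition was proved for an arbitrary bornology, using Lemma~\ref{lemma-bounded} and the bornology axioms, so it applies to this $\mathfrak{B}$.

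As a sanity check, one can also see directly why coarsely bounded sets are of the form $B_E$. If $B\times B\in\mathcal{E}$, then $B_{B\times B}=B^{-1}B\in\mathfrak{B}$. Fixing $b_0\in B$, we have $B\subset b_0\,(B^{-1}B)^{-1}$, and this set lies in $\mathfrak{B}$ since the bornology contains singletons and is closed under inverses and products. Conversely, if $A=B_E$, then $A\times\{e\}$ has difference set $A^{-1}\in\mathfrak{B}$, so it lies in $\mathcal{E}_L=\mathcal{E}$, and Lemma~\ref{lemma-bounded} gives $A\times A\in\mathcal{E}$.

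The main obstacle is not in this deduction but in relying on Proposition~\ref{главное предложние}, whose closure-under-products step uses strong $G$-invariance somewhat sketchily. Since earlier results may be assumed, I take it as given. If a self-contained check were needed, I would verify $A\cdot B\in\mathfrak{B}$ as follows. Translate each pair $(x,y)\in E_A$ by $g=x^{-1}$, giving the controlled set $\{(e,x^{-1}y)\}$. Then compose with the similarly translated copy of $E_B$, and use Lemma~\ref{lemma-bounded} to conclude.
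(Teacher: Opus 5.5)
Your chain $\mathfrak{B}(\mathcal{E})=\mathfrak{B}(\mathcal{E}_L)=\mathfrak{B}$ is exactly the derivation the paper intends; the corollary is given no separate proof. The problem is the justification of the first equality, $\mathcal{E}=\mathcal{E}_L$. Its half $\mathcal{E}_L\subseteq\mathcal{E}$, which the paper calls clear, is false for strong $G$-invariance as defined, where each piece of a \emph{disjoint} decomposition receives a single translate.

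Take an infinite group $G$ and $\mathcal{E}_0=\{E\subseteq G\times G\mid E\setminus\operatorname{diag}_{G\times G}\ \text{is finite}\}$. This is a coarsely connected coarse structure. It is also strongly $G$-invariant: left translations preserve the diagonal, and only finitely many pieces of a disjoint decomposition of $E$ meet the finite set $E\setminus\operatorname{diag}_{G\times G}$. So $\bar gE$ has at most $|E\setminus\operatorname{diag}_{G\times G}|$ off-diagonal points. Here $\{B_E\mid E\in\mathcal{E}_0\}$ is the family of finite sets. Yet $\{(x,xs)\mid x\in G\}$ with $s\neq e$ lies in $\mathcal{E}_L$ but not in $\mathcal{E}_0$.

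The converse half of your sanity check passes through the same false inclusion, so it is not an independent confirmation. A uniform version of invariance, giving $\bigcup_{g\in G}gF\in\mathcal{E}$ for $F\in\mathcal{E}$, would rescue $\mathcal{E}_L\subseteq\mathcal{E}$; Definition~\ref{def-g-inv} does not provide it.

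The corollary itself is nevertheless true. The translation trick from your last paragraph proves it directly once it is used in the right places.

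\textbf{(i)} For $E\in\mathcal{E}$, split $E$ into singletons (controlled by the subset axiom) and translate $(x,y)$ by $x^{-1}$. Strong invariance gives $\{e\}\times B_E\in\mathcal{E}$, so $B_E\in\mathfrak{B}(\mathcal{E})$ by Lemma~\ref{lemma-bounded}.

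\textbf{(ii)} If $B\times B\in\mathcal{E}$ and $b_0\in B$, then $\{e\}\times B=\{(e,b_0)\}\circ(B\times B)\in\mathcal{E}$ by coarse connectedness, and $B=B_{\{e\}\times B}$.

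Hence $\mathfrak{B}(\mathcal{E})=\{B_E\mid E\in\mathcal{E}\}$ with no reference to $\mathcal{E}_L$.

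Among the bornology axioms, most are immediate: singletons ($\{g\}=B_{\{(e,g)\}}$), subsets, unions, and inverses ($B_{E^{-1}}=(B_E)^{-1}$). For products your sketch needs a change, because composing $\{e\}\times A$ with $\{e\}\times B$ only uses pairs with $a=e$. Instead, translate each $(x,y)\in E_A$ by $y^{-1}$ to get $A^{-1}\times\{e\}\in\mathcal{E}$. Then $(A^{-1}\times\{e\})\circ(\{e\}\times B)=A^{-1}\times B\in\mathcal{E}$, and $B_{A^{-1}\times B}=AB$.
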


Considering strongly $G$-invariant coarse structures allows us to
strengthen Proposition~\ref{правое и левое}. We show that two such
coarse structures on~$G$ are coarsely equivalent whenever their
bornologies coincide. The proof uses the following result
from~\cite[Theorem~6.3]{brodskiy2006coarsestructuresgroupactions}.

\begin{theorem}[Brodskiy--Dydak--Mitra]\label{th-BDM}
  Let $\alpha_i\colon G_i\times X\to X$, $i=1,2$, be two commuting
  left actions of groups $G_i$ on a set~$X$. Suppose the coarse
  structures $\mathcal{E}_i$, $i=1,2$, have the same families of
  bounded sets and each action $\alpha_i$ is coarse with respect to
  $\mathcal{E}_i$. Then:
  \begin{enumerate}
      \item $G_1$ and $G_2$ are coarsely equivalent;
      \item $(X,\mathcal{E}_1)$ is coarsely equivalent to
            $(X,\mathcal{E}_2)$.
  \end{enumerate}
\end{theorem}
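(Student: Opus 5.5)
The plan is to route everything through orbit maps and use the fact that the two actions commute to compare $\mathcal{E}_1$ with $\mathcal{E}_2$. First I fix the reading of the hypotheses, which is the one in~\cite{brodskiy2006coarsestructuresgroupactions}, adding nothing to it. That $\alpha_i$ is \emph{coarse} with respect to $\mathcal{E}_i$ means three things:
\begin{itemize}
\item the translates act uniformly: for every $E\in\mathcal{E}_i$ the set $\bigcup_{g\in G_i}gE$ belongs to $\mathcal{E}_i$;
\item the action is cobounded: some $K_i\in\mathfrak{B}(\mathcal{E}_i)$ satisfies $G_iK_i=X$;
\item the action is proper: for bounded $B$ the set $\{g\mid gB\cap B\neq\emptyset\}$ is bounded in $G_i$.
\end{itemize}
The coarse structure on $G_i$ is the one pulled back along an orbit map $o_i\colon g\mapsto gx_0$. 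With this reading, the orbit map $o_i\colon G_i\to(X,\mathcal{E}_i)$ is a coarse equivalence: it is coarsely surjective by coboundedness and proper by properness. So conclusion~(1) follows from~(2) by composing $o_1$, a coarse equivalence $(X,\mathcal{E}_1)\to(X,\mathcal{E}_2)$, and a coarse inverse of $o_2$. The main work is therefore~(2).

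For~(2) I want to show that the identity $(X,\mathcal{E}_1)\to(X,\mathcal{E}_2)$ is a coarse equivalence, or failing that, a map close to it. Properness of the identity is immediate, because $\mathfrak{B}(\mathcal{E}_1)=\mathfrak{B}(\mathcal{E}_2)=:\mathfrak{B}$. The steps are as follows.

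\begin{enumerate}
\item Replace $K_1$ and $K_2$ by $K:=K_1\cup K_2\cup\{x_0\}\in\mathfrak{B}$, so that both actions are cobounded with the same bounded set $K$.
\item Describe $\mathcal{E}_1$ by generators. By uniformity and coboundedness of $\alpha_1$, every $E\in\mathcal{E}_1$ lies in $\bigcup_{g\in G_1}g(B\times B)$ for some $B\in\mathfrak{B}$. To see this, write $x=gk$ with $k\in K$. Then $(x,y)\in E$ gives $(k,g^{-1}y)\in\bigcup_{g}g^{-1}E$, so $g^{-1}y$ lies in the bounded set $E'[K]$ with $E'=\bigcup_{g}g^{-1}E\in\mathcal{E}_1$. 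Hence it suffices to show that $\bigcup_{g\in G_1}g(B\times B)\in\mathcal{E}_2$ for each $B\in\mathfrak{B}$.
\item Use commutation to transfer these generators to $\mathcal{E}_2$. Properness of $\alpha_2$ gives that $S:=\{s\in G_2\mid sK\cap B\neq\emptyset\}$ is bounded in $G_2$ and that $B\subset SK$. Since the actions commute, $g(B\times B)\subset\bigcup_{s,t\in S}(s\times t)\,(gK\times gK)$. By uniformity of $\alpha_2$, together with the boundedness of $S$, the problem reduces to showing that $\bigcup_{g\in G_1}gK\times gK$ is $\mathcal{E}_2$-controlled.
\end{enumerate}

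This last reduction is the main obstacle. The difficulty is that the sets $gK$ are $\mathcal{E}_1$-bounded uniformly in $g$, but a priori not $\mathcal{E}_2$-bounded uniformly in $g$. My first attempt is to apply the symmetric argument to $\alpha_1$: write $K\subset S'x_0$ up to a bounded error, with $S'\subset G_1$ bounded in $G_1$. Then $gK$ is compared with $gS'x_0$, and commutation is used to move $g$ across the $G_2$-coordinates of the points of $K$. If this circularity, where comparing $gk$ with $gx_0$ needs control of $G_1$ in $\mathcal{E}_2$, does not close up, I will pass to a map close to the identity instead. That map is $\varphi\colon gk\mapsto h_gk$, where $h_g\in G_2$ is chosen with $gx_0\in h_gK$. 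The coupling $g\leftrightarrow h_g$ is exactly the candidate coarse equivalence $G_1\to G_2$. It is bornologous because both structures are generated by translates of the single bounded set $K\times K$ under the respective commuting groups.

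The reverse direction follows by the symmetric argument with the indices interchanged. Closeness of the compositions to the identities is automatic, since each composite moves a point only inside translates of $K$.
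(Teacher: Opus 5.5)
The paper does not prove this statement (it is quoted from Brodskiy--Dydak--Mitra), so your argument has to stand on its own, and its central strategy fails. In general neither the identity nor any map close to it is a coarse equivalence $(X,\mathcal{E}_1)\to(X,\mathcal{E}_2)$.

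Here is a counterexample. Take $X=G$ a group containing an element $s$ with infinite conjugacy class (e.g.\ the Heisenberg group), with the bornology of finite sets. Let $G_1=G$ act by $g\cdot x=gx$ and $G_2=G$ act by $h\cdot x=xh^{-1}$, and set $\mathcal{E}_1=\mathcal{E}_L$, $\mathcal{E}_2=\mathcal{E}_R$. All hypotheses hold:
\begin{itemize}
\item the actions commute;
\item each is uniformly bornologous for its structure;
\item each is cobounded by a single point;
\item each is proper, in your sense and in the paper's;
\item both structures have the finite sets as bounded sets.
\end{itemize}
Yet $E=\{(x,xs)\mid x\in G\}\in\mathcal{E}_L$, while $\{x(xs)^{-1}\mid x\in G\}$ is the conjugacy class of $s^{-1}$. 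So $E\notin\mathcal{E}_R$, and the identity is not bornologous.

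Passing to a map close to the identity does not help. If $D=\{(\varphi(x),x)\}\in\mathcal{E}_2$, then $E\subset D^{-1}\circ(\varphi\times\varphi)(E)\circ D$, so $\varphi$ is not bornologous either. Indeed, with $x_0=e$ and $K=\{e\}$ your $\varphi\colon gk\mapsto h_gk$ is literally the identity, since $h_g=g^{-1}$. In this example conclusion~(2) holds only through the inversion $x\mapsto x^{-1}$, which is how the paper compares $\mathcal{E}_L$ with $\mathcal{E}_R$.

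The step that breaks is the reduction in your item~3. Passing from $D=\bigcup_{g\in G_1}gK\times gK$ to $\bigcup_{s,t\in S}(s\times t)D$ requires $\{(sy,y)\mid s\in S,\ y\in X\}\in\mathcal{E}_2$. That is, a bounded set of $G_2$ would have to displace every point by a uniformly controlled amount. Uniformity of $\alpha_2$ gives nothing of the sort: it controls pairs $(hx_0,hsx_0)$, not $(shx_0,hx_0)$. In the example $D$ is the diagonal, so the set you single out as the main obstacle is trivially controlled. Meanwhile $(s\times t)D=\{(xs^{-1},xt^{-1})\}$ is not $\mathcal{E}_R$-controlled once $s^{-1}t$ has infinite conjugacy class.

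The missing idea is that commutation makes $G_1$ act, in $G_2$-orbit coordinates, by multiplication on the \emph{other} side. Suppose $G_2$ acts freely and transitively, and identify $X$ with $G_2$ via $h\mapsto hx_0$. Then $\mathcal{E}_2$ is the left and $\mathcal{E}_1$ the right bornological structure of the transported bornology. The equivalence is $hx_0\mapsto h^{-1}x_0$, which is far from the identity.

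Your closing claim that $\varphi$ is bornologous because both structures are generated by translates of $K\times K$ simply asserts what has to be proved, and the example shows it is false. Your derivation of~(1) from~(2) via orbit maps is fine as a reduction, but it inherits this gap.
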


We now define a coarse action, following~\cite{brodskiy2006coarsestructuresgroupactions}.

\begin{definition}
  An action $\alpha\colon G\times X\to X$ of a group on a coarse
  space $(X,\mathcal{E})$ is called \emph{coarse} if:
  \begin{enumerate}
      \item For each $x\in X$ the map $\phi_x\colon G\to G\cdot x$
            is coarsely proper, i.e., the preimage of every bounded
            set is bounded.
      \item The action is cobounded: for some bounded set $U$ one has
            $X=G\cdot U$.
      \item The action is uniformly bornological: for every controlled
            set $E$ there exists a controlled set $E'$ such that
            $(g\cdot x,g\cdot y)\in E'$ for all $(x,y)\in E$ and
            $g\in G$.
  \end{enumerate}
\end{definition}

\begin{theorem}\label{th-coarseequivstruct}
  Let $\mathcal{E}_1$ and $\mathcal{E}_2$ be coarsely connected,
  strongly $G$-invariant left coarse structures with the same families
  of coarsely bounded sets,
  $\mathfrak{B}(\mathcal{E}_1)=\mathfrak{B}(\mathcal{E}_2)$. Then
  the coarse spaces $(G,\mathcal{E}_1)$ and $(G,\mathcal{E}_2)$ are
  coarsely equivalent.
\end{theorem}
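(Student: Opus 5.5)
The plan is to show that the two coarse structures are in fact equal, so that the identity map $\operatorname{id}_G\colon(G,\mathcal{E}_1)\to(G,\mathcal{E}_2)$ is a coarse equivalence. The statement is phrased as if Theorem~\ref{th-BDM} were needed, but Proposition~\ref{главное предложние} and the proposition $\mathfrak{B}(\mathcal{E}_L)=\mathfrak{B}$ already seem to suffice.

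\emph{Step 1: each structure is a left bornological structure.} For $i=1,2$, apply Proposition~\ref{главное предложние} to $\mathcal{E}_i$; this is allowed because $\mathcal{E}_i$ is coarsely connected and strongly $G$-invariant on the left. It gives a bornology
\[
    \mathfrak{B}_i=\bigl\{\{x^{-1}y\mid(x,y)\in E\}\mid E\in\mathcal{E}_i\bigr\}
\]
with $\mathcal{E}_i=\mathcal{E}_L(\mathfrak{B}_i)$.

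\emph{Step 2: identify $\mathfrak{B}_i$ with the coarsely bounded sets.} By the proposition preceding Proposition~\ref{правое и левое}, $\mathfrak{B}(\mathcal{E}_L(\mathfrak{B}_i))=\mathfrak{B}_i$. Corollary~\ref{следствие про сильную g-инвариантность} states the same thing directly. Hence $\mathfrak{B}_i=\mathfrak{B}(\mathcal{E}_i)$.

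\emph{Step 3: conclude.} The hypothesis $\mathfrak{B}(\mathcal{E}_1)=\mathfrak{B}(\mathcal{E}_2)$ now gives $\mathfrak{B}_1=\mathfrak{B}_2$. Since $\mathcal{E}_L$ is defined purely in terms of the bornology, $\mathcal{E}_1=\mathcal{E}_L(\mathfrak{B}_1)=\mathcal{E}_L(\mathfrak{B}_2)=\mathcal{E}_2$. The identity map is then proper and bornologous in both directions, and both compositions equal the identity, so the two coarse spaces are coarsely equivalent in the sense of Definition~\ref{def-coarse-equiv}.

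The main obstacle is Step~1, specifically the inclusion $\mathcal{E}_L(\mathfrak{B}_i)\subset\mathcal{E}_i$, which Proposition~\ref{главное предложние} dismisses as ``clear''. Take $E$ with $A=\{x^{-1}y\mid(x,y)\in E\}\in\mathfrak{B}_i$; we must show $E\in\mathcal{E}_i$. By definition of $\mathfrak{B}_i$, $A=\{x^{-1}y\mid(x,y)\in F\}$ for some $F\in\mathcal{E}_i$, and then $\{(e,a)\mid a\in A\}\in\mathcal{E}_i$ by strong invariance. I would decompose $E$ according to the value $a=x^{-1}y$, and translate the pair $(e,a)$ by $g=x$ on each piece, choosing one $x$ per piece. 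For this to be legitimate, the pieces $\{(e,a)\}$ of the controlled set $\{e\}\times A$ must be reused with varying translates. Since $a$ can recur with many different $x$, I would realise $E$ as a union of translates of subsets of $\{e\}\times A$ taken over a family indexed by $G$, as Definition~\ref{def-g-inv} permits.

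If this bookkeeping turns out to be problematic, the fallback is Theorem~\ref{th-BDM}. Take $G_1=G_2=G$ acting by left translation, and let the commuting second action be right translation. Coarseness of each action would be verified from strong invariance, coboundedness from coarse connectedness, and properness of the orbit maps from the equality of bounded sets.
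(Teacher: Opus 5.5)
You correctly flagged Step~1 as the crux, but it is not merely under-justified. With Definition~\ref{def-g-inv} as written it is false, so the bookkeeping you sketch cannot be carried out. The pieces $E_h$ form a \emph{disjoint} decomposition and each is moved by one element $g_h$. So a pair $(e,a)\in\{e\}\times A$ can be placed at only one position $(g,ga)$, while $E$ may contain $(x,xa)$ for infinitely many $x$.

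Here is a concrete counterexample. On $G=\mathbb{Z}$ let $\mathcal{E}_1$ consist of all $E\subset\mathbb{Z}\times\mathbb{Z}$ with $E\setminus\operatorname{diag}$ finite.
\begin{itemize}
\item $\mathcal{E}_1$ is a coarsely connected coarse structure with $\mathfrak{B}(\mathcal{E}_1)=\mathfrak{B}_{\min}$.
\item It is strongly $G$-invariant on the left: a piecewise translation sends diagonal pairs to diagonal pairs, and the finitely many off-diagonal pairs of $E$ to at most as many off-diagonal pairs.
\item Yet $\{(n,n+1)\mid n\in\mathbb{Z}\}$ lies in $\mathcal{E}_L(\mathfrak{B}_{\min})$ but not in $\mathcal{E}_1$.
\end{itemize}
In fact the theorem itself fails for $\mathcal{E}_1$ and $\mathcal{E}_2=\mathcal{E}_L(\mathfrak{B}_{\min})=\mathcal{E}_\rho$ with $\rho(x,y)=|x-y|$. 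Take any bornologous $\phi\colon(\mathbb{Z},\mathcal{E}_2)\to(\mathbb{Z},\mathcal{E}_1)$. Only finitely many distinct pairs $(\phi(n),\phi(n+1))$ are off-diagonal, and consecutive values either agree or form one of these pairs. Hence $\phi(\mathbb{Z})$ is finite, some fibre is infinite, and $\phi$ is not proper. So no argument can close Step~1 without strengthening the hypothesis.

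Your fallback breaks at the same point. Coarseness of left translation requires a single controlled set containing $\bigcup_{g\in G}gE$, and in the example $\bigcup_{g}g\{(0,1)\}$ is not controlled. Moreover, strong \emph{left} invariance says nothing about right translations. For a left structure such as $\mathcal{E}_L(\mathfrak{B}_{\min})$, right translations are uniformly bornologous only when all conjugacy classes are finite. The paper's own proof via Theorem~\ref{th-BDM} has the same hole: the constant family $g_h\equiv g$ only gives $gE\in\mathcal{E}$ for each fixed $g$, not one controlled set for all $g$.

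The missing hypothesis is uniformity: $\bigcup_{g\in G}gE\in\mathcal{E}$ for every $E\in\mathcal{E}$, equivalently allowing overlapping pieces in Definition~\ref{def-g-inv}. Under it your route works and is more direct than the paper's.
\begin{itemize}
\item If $A=\{u^{-1}v\mid(u,v)\in F\}$ with $F\in\mathcal{E}_i$, translating each pair $(u,v)$ by $u^{-1}$ gives $\{e\}\times A\in\mathcal{E}_i$.
\item Any $E$ with $\{x^{-1}y\mid(x,y)\in E\}\subset A$ then lies in $\bigcup_{x\in G}x(\{e\}\times A)$, which is controlled.
\end{itemize}
Together with your Step~2 this gives $\mathcal{E}_i=\mathcal{E}_L(\mathfrak{B}(\mathcal{E}_i))$. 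Hence $\mathcal{E}_1=\mathcal{E}_2$ outright, with no appeal to Brodskiy--Dydak--Mitra.
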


\begin{proof}
  We show that the natural left action of a bornological group on
  itself, $G\times G\to G$, is coarse.
  \begin{itemize}
      \item \emph{Coboundedness.} The action is transitive and free,
            so any single point serves as~$U$.
      \item \emph{Uniform bornologicity.} This follows immediately
            from strong $G$-invariance applied to $E$ with the
            constant sequence $\bar{g}=(g,g,\ldots)$.
      \item \emph{Coarse properness.} This follows from the
            definition of a bornological group.
  \end{itemize}
  It remains to apply Theorem~\ref{th-BDM}
  from~\cite{brodskiy2006coarsestructuresgroupactions} to the left
  translation action of $G$ on itself.
\end{proof}

\subsection{Examples}

We give several examples of bornological groups and the resulting
left coarse structures. We begin with various bornological structures
on the additive group of integers. The maximal bornology, as in
Example~\ref{ex-Bfull}, consists of all subsets of the group.

\begin{exmpl}
  The group $\mathbb{Z}$ with the bornology
  $\mathfrak{B}_{\mathrm{full}}=2^{\mathbb{Z}}$ has left bornological coarse
  structure $\mathcal{E}_L=2^{\mathbb{Z}\times\mathbb{Z}}$.
\end{exmpl}

The intersection of any family of bornologies is again a bornology,
so the minimal bornology (with respect to inclusion) is the family of
finite subsets.

\begin{exmpl}
  The group $\mathbb{Z}$ with the minimal bornology
  $\mathfrak{B}_{\min}$ has left bornological coarse structure $\mathcal{E}_L$
  coinciding with $\mathcal{E}_\rho$, where $\rho$ is the metric
  $\rho(x,y)=|x-y|$.
  \begin{proof}
    Minimality of the bornology gives $\mathcal{E}_L\subset\mathcal{E}_\rho$.

    For the reverse inclusion, let $E\in\mathcal{E}_\rho$. Then the
    set $\{x^{-1}y\mid(x,y)\in E\}=\{y-x\mid(x,y)\in E\}$ is
    finite, since $|x-y|\leq C$ for some $C$ implies that $y-x$
    takes only finitely many values. Hence $E\in\mathcal{E}_L$.
  \end{proof}
\end{exmpl}

We turn to a more interesting example: the Heisenberg group, which is
the group of upper unitriangular integer matrices of size~$3\times 3$.

Equip the Heisenberg group $\mathcal{H}$ with the metric
\[
    \rho(A,B)=\max_{i,j}|a_{ij}-b_{ij}|,
\]
and denote the corresponding coarse structure by $\mathcal{E}_\rho$.
As the bornology we take the family of sets that are bounded with
respect to this metric; these are precisely the finite sets.

\begin{exmpl}\label{пример для Гейзенберга}
  The left bornological coarse structure of $\mathcal{H}$ does not
  coincide with the bounded coarse structure $\mathcal{E}_\rho$:
  $\mathcal{E}_L\neq\mathcal{E}_\rho$.
\end{exmpl}

  \begin{proof}
    Let
    \[
        B_n=\begin{pmatrix}1&n+1&1\\0&1&1\\0&0&1\end{pmatrix},\qquad
        A_n=\begin{pmatrix}1&n&1\\0&1&0\\0&0&1\end{pmatrix}.
    \]
    Then $\rho(A_n,B_n)=1$, but $\rho(B_n^{-1}A_n,E)=n+1$. Hence the
    set $E=\bigcup_{n=1}^{\infty}(B_n,A_n)$ is controlled in
    $\mathcal{E}_\rho$ but not in $\mathcal{E}_L$, since
    $\{B_n^{-1}A_n\}$ is unbounded. Therefore
    $\mathcal{E}_\rho\neq\mathcal{E}_L$.
  \end{proof}
  Thus, although the bornology and the bounded coarse structure are
  both generated by the same metric, the two coarse structures do not
  coincide.

Hence bornological coarse structures need not coincide in general.
For the case where the bornology consists of finite sets it is known
(see~\cite[Proposition~2.1]{brodskiy2006coarsestructuresgroupactions})
that the left and right coarse structures coincide if and only if $G$
is an FC-group, i.e., every conjugacy class in $G$ is finite.

\bigskip

Before giving another useful example of a bornological structure, we
recall the definition of a pseudometric.

\begin{definition}
  A map $\rho\colon X\times X\to\mathbb{R}_{\geq 0}$ is called a
  \emph{pseudometric} if:
  \begin{enumerate}
      \item $\rho(x,x)=0$ for all $x$;
      \item $\rho(x,y)=\rho(y,x)$ for all $x,y$;
      \item $\rho(x,z)\leqslant\rho(x,y)+\rho(y,z)$ for all $x,y,z$.
  \end{enumerate}
\end{definition}

A pseudometric differs from a metric in that $\rho(x,y)$ may equal
zero for $x\neq y$.

\begin{exmpl}
  Consider the same group as in Example~\ref{пример для Гейзенберга},
  but equipped with the pseudometric $\rho(A,B)=|a_{12}-b_{12}|$.
\end{exmpl}

\begin{proof}
  Call a set bounded if it has finite diameter. Axioms~1--3 of a
  bornology are obvious; for axiom~4 it remains to verify closure
  under products, which follows from the matrix multiplication
  formula. Hence we indeed obtain a bornology.

  For the pseudometric defined above, the bounded coarse structure
  $\mathcal{E}_\rho$ is well defined. The matrix multiplication
  formula gives $\rho(E,A^{-1}\cdot B)=\rho(A,B)$ for this
  pseudometric, whence $\mathcal{E}_L=\mathcal{E}_\rho$.
\end{proof}

This example is of interest because it shows that pseudometrics can
generate bornologies.

\subsection{Metrizability of bornological coarse structures}

Recall that metrizability of a coarse structure means, informally,
that there exists a metric (not uniquely determined) generating it as
a bounded coarse structure.

\begin{definition}
  A coarse structure $\mathcal{E}$ on a set $X$ is \emph{metrizable}
  if there exists a metric $\rho$ on $X$ such that
  $\mathcal{E}=\mathcal{E}_\rho$.
\end{definition}

The following metrizability criterion is proved
in~\cite[Theorem~2.55]{Roe2003LecturesOC}.

\begin{definition}
    Let us say that a family of sets $\{S_i\mid S_i\subset 2^{X\times X}\}_{i\in I}$, where $I$ is a set of indices, generates a coarse structure $\mathcal{E}$ on the set $X$ if: 
    \begin{itemize}
        \item $S_i \subset \mathcal{E} $ for all $i \in I$;
        \item Structure $\mathcal{E}$ has the minimum inclusion among all structures containing the entire set $S_i$.
    \end{itemize}
\end{definition}

In that case, we will write $\mathcal{E}=\langle S_i\rangle$.

\begin{theorem}\label{roe-metrizable}
  A coarse structure $\mathcal{E}$ is metrizable if and only if it is
  countably generated.
\end{theorem}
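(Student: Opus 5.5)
The proof has two directions. In each, the countability of a generating family is converted into a metric, or a metric into a countable generating family.

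\emph{Countably generated $\Rightarrow$ metrizable.} Let $\mathcal{E}=\langle E_1,E_2,\dots\rangle$. First replace the generators by an increasing sequence
\[
F_n:=\Bigl(\operatorname{diag}_{X\times X}\cup\bigcup_{i\le n}(E_i\cup E_i^{-1})\Bigr)^{\circ n},
\]
where $\circ n$ denotes the $n$-fold composition. The sets $F_n$ are controlled, symmetric, contain the diagonal, and satisfy $F_n\circ F_n\subset F_{2n}$. Every controlled set lies in some $F_n$. To see this, note that the family of subsets of the $F_n$'s is closed under the coarse axioms: for unions and compositions use $F_n\cup F_m\subset F_{\max}$ and $F_n\circ F_m\subset F_{n+m}$. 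Hence this family is a coarse structure containing the generators, so by minimality it equals $\mathcal{E}$.

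Now define $\rho(x,y)$ as the infimum, over chains $x=x_0,\dots,x_k=y$, of $\sum_j \ell(x_{j-1},x_j)$. Here $\ell(a,b)=\min\{n:(a,b)\in F_n\}$, $\ell(a,a)=0$, and $\ell=\infty$ if no such $n$ exists. We may assume coarse connectedness, which makes $\rho$ finite. (In general one either allows the value $\infty$ or, following Roe, patches components at large distances.) Since $\ell\ge1$ off the diagonal, $\rho$ is a genuine metric. Clearly $F_n\subset\{\rho\le n\}$, so $\mathcal{E}\subset\mathcal{E}_\rho$.

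The converse inclusion is the main obstacle. We must show that $\rho(x,y)\le R$ forces $(x,y)\in F_{N(R)}$. A chain of total cost at most $R$ has at most $R$ nontrivial steps, each lying in some $F_{n_j}$ with $\sum n_j\le R$. Therefore the pair lies in $F_{n_1}\circ\cdots\circ F_{n_k}\subset F_{R}^{\circ R}$, which is controlled. This gives $\mathcal{E}_\rho\subset\mathcal{E}$.

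\emph{Metrizable $\Rightarrow$ countably generated.} If $\mathcal{E}=\mathcal{E}_\rho$, take $E_n=\{(x,y):\rho(x,y)<n\}$. Each $E_n$ is controlled, and every controlled set is contained in some $E_n$. Hence $\mathcal{E}=\langle E_n\rangle_{n\in\mathbb{N}}$, which is countably generated.
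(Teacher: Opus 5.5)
Your proof is correct and is essentially the standard argument of Roe's Theorem~2.55, which the paper cites without proof: you normalize the generators to a nested, symmetric, exhausting sequence of controlled sets and use the index function as the metric. Your chain infimum is in fact redundant, since $F_n\circ F_m\subset F_{n+m}$ already makes $\ell$ subadditive, so $\rho=\ell$ and $\{\rho\le n\}=F_n$ for $n\ge 1$.

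One correction to your parenthetical: components cannot be ``patched at large distances''. For any real-valued metric every single pair $\{(x,y)\}$ is $\rho$-bounded, so $\mathcal{E}_\rho$ is always coarsely connected. The statement therefore genuinely needs coarse connectedness or $\infty$-valued metrics; for example, the structure of all subsets of the diagonal on a two-point set is countably generated but not metrizable by a finite metric.
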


\begin{proposition}\label{свойство метрики}
  Let $\rho$ be a metric on a group $G$. Then $\rho$ is bornological
  if and only if there exists a bornology $\mathfrak{B}$ such that
  the left coarse structure $\mathcal{E}_L$ coincides with the
  bounded coarse structure $\mathcal{E}_\rho$. The bornology
  $\mathfrak{B}$ consists of all sets of finite diameter.
\end{proposition}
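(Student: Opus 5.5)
We prove the two implications separately, working throughout with the family
\[
    \mathfrak{B}_\rho:=\{B\subset G\mid \operatorname{diam}_\rho B<\infty\}.
\]

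\emph{Reverse direction.} Suppose some bornology $\mathfrak{B}$ satisfies $\mathcal{E}_L=\mathcal{E}_\rho$. Fix $C>0$ and set $E_C:=\{(x,y)\mid\rho(x,y)<C\}\in\mathcal{E}_\rho=\mathcal{E}_L$. Then $A:=\{x^{-1}y\mid(x,y)\in E_C\}\in\mathfrak{B}$. For any $g\in G$ the set $gE_C=\{(gx,gy)\}$ produces the same set $A$ of quotients $(gx)^{-1}gy=x^{-1}y$. Hence the union $\bigcup_{g}gE_C$ lies in $\mathcal{E}_L=\mathcal{E}_\rho$, so it is $\rho$-bounded by some constant $S_C$. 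This is exactly bornologicity.

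The bornology is forced to be $\mathfrak{B}_\rho$. By the Proposition $\mathfrak{B}(\mathcal{E}_L)=\mathfrak{B}$, and $\mathfrak{B}(\mathcal{E}_\rho)$ is the family of sets of finite diameter. These coincide because $\mathcal{E}_L=\mathcal{E}_\rho$.

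\emph{Forward direction.} Let $\rho$ be bornological and take $\mathfrak{B}:=\mathfrak{B}_\rho$. We first check the bornology axioms.

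\begin{itemize}
\item Singletons, subsets and finite unions are clear; for unions, $\operatorname{diam}(A\cup B)\le \operatorname{diam}A+\operatorname{diam}B+\rho(a,b)$ for any $a\in A$, $b\in B$.
\item Inverses need an argument, since $\rho$ is only bornological, not invariant. Let $A$ be bounded, and fix $a_0\in A$ and $C$ with $\rho(a_0,a)<C$ for all $a\in A$. For $a\in A$, left-translate the pair $(e,\,a^{-1}a_0)$ by $a$ to get $(a,a_0)$. The pairs $(a_0^{-1}a,e)$, $a\in A$, lie within distance $S_C$ after translating $(a,a_0)$ by $a_0^{-1}$. So $a_0^{-1}A$ is bounded, with every element within $S_C$ of $e$.
\end{itemize}

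\emph{Key lemma.} The step I would do first reduces everything to the identity: $B\in\mathfrak{B}$ if and only if $B\times\{e\}$ is uniformly close, that is, $\sup_{b\in B}\rho(b,e)<\infty$. Bornologicity then gives
\[
    \rho(g,gb)<S_{C}\qquad\text{whenever }\rho(e,b)<C,\ g\in G,
\]
and conversely $\rho(g,gb)<C$ for some $g$ implies $\rho(e,b)<S_C$, by translating by $g^{-1}$. Thus
\[
    \rho(e,b)\ \text{bounded on }b\in B \iff \rho(g,gb)\ \text{bounded uniformly in } g\in G,\ b\in B.
\]
Consequences of this equivalence:
\begin{itemize}
\item \emph{Inverses.} $\rho(e,b^{-1})=\rho(b\cdot e,\,b\cdot b^{-1})$ up to the $S$-function, which is controlled by $\rho(e,b)$. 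Hence $A^{-1}$ is bounded.
\item \emph{Products.} $\rho(e,ab)\le\rho(e,a)+\rho(a,ab)$, and $\rho(a,ab)<S_C$ with $C=\sup_{b\in B}\rho(e,b)$. Hence $AB$ is bounded.
\end{itemize}

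\emph{Equality of coarse structures.} With the lemma in hand, let $E$ be any set of pairs.
\begin{itemize}
\item $E\in\mathcal{E}_\rho$ means $\rho(x,y)<C$ on $E$. Translating by $x^{-1}$ gives $\rho(e,x^{-1}y)<S_C$, so $\{x^{-1}y\}\in\mathfrak{B}$ and $E\in\mathcal{E}_L$.
\item Conversely, if $\{x^{-1}y\}$ is bounded, then $\rho(e,x^{-1}y)<C$. Translating by $x$ gives $\rho(x,y)<S_C$, so $E\in\mathcal{E}_\rho$.
\end{itemize}
This proves $\mathcal{E}_L=\mathcal{E}_\rho$.

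\emph{Main obstacle.} The only delicate point is keeping track of the two-step use of $S_C$ (translating and translating back) in the key lemma, so that the constants stay uniform in $g$. Everything else is routine once the lemma is established.
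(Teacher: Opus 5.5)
Your proof is correct. The reverse direction is the paper's necessity argument, run directly rather than by contradiction. Your extra step $\mathfrak{B}=\mathfrak{B}(\mathcal{E}_L)=\mathfrak{B}(\mathcal{E}_\rho)=\mathfrak{B}_\rho$ shows that the bornology is forced, which the paper's proof does not address.

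The forward direction is organized differently from the paper's. The paper takes $\mathfrak{B}$ to be the family of quotient sets $\{x^{-1}y\mid(x,y)\in E\}$, $E\in\mathcal{E}_\rho$, and checks the bornology axioms for that family (products via the pairs $(y^{-1}x,a^{-1}b)$). With that choice $\mathcal{E}_\rho\subset\mathcal{E}_L$ comes for free. The inclusion $\mathcal{E}_L\subset\mathcal{E}_\rho$, said to follow ``from the construction'', in fact uses bornologicity twice: once to go from a $\rho$-close pair $(x',y')$ to $\rho(e,x'^{-1}y')<S_C$, and again to reach any pair with that quotient.

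You instead start from the finite-diameter sets $\mathfrak{B}_\rho$ and reduce all axioms and both inclusions to bounds on $\rho(e,\cdot)$. Bornologicity is used once in each direction, to pass between $\rho(e,b)$ and $\rho(g,gb)$. The translation mechanism is the same, but your route proves the final sentence of the statement (the bornology is exactly the sets of finite diameter) instead of leaving it implicit, and every use of bornologicity is visible.

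Two small clean-ups:
\begin{itemize}
\item Your first ``Inverses'' bullet only shows that $a_0^{-1}A$ is bounded, which is not closure under inversion. Drop it in favour of the later argument (translate $(b,e)$ by $b^{-1}$).
\item Bornologicity is stated with a strict inequality, so in the product step take $C$ strictly larger than $\sup_{b\in B}\rho(e,b)$.
\end{itemize}
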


\begin{proof}
  Suppose $\rho$ is bornological. Consider the family $\mathfrak{B}$
  consisting of all subsets of $G$ of the form
  \[
      \{x^{-1}y\mid(x,y)\in E\},\qquad E\in\mathcal{E}_\rho.
  \]
  \begin{itemize}
      \item \emph{$\mathfrak{B}$ is a bornology.}
      \begin{enumerate}
          \item Singletons belong to $\mathfrak{B}$.
          \item A subset of a bounded set is bounded, since
                $E'\subset E$ and $E\in\mathcal{E}_L$ imply
                $E'\in\mathcal{E}_L$.
          \item Closure under inverses is verified analogously.
          \item Closure under products: consider the set
                \[
                    \{x^{-1}ya^{-1}b\mid(x,y)\in E,\,(a,b)\in E'\}.
                \]
                To prove boundedness it suffices to show this set
                arises from a controlled set. Consider pairs of the
                form $(y^{-1}x,a^{-1}b)$. Then
                \[
                    \rho(y^{-1}x,a^{-1}b)\leq\rho(y^{-1}x,e)
                    +\rho(e,a^{-1}b)<C
                \]
                by bornologicity of $\rho$, which gives the required
                boundedness.
          \item Closure under finite unions follows from the axioms
                of a coarse structure.
      \end{enumerate}

      \item \emph{$\mathcal{E}_L=\mathcal{E}_\rho$.} The inclusion
            $\mathcal{E}_L\subset\mathcal{E}_\rho$ follows from the
            construction. For the reverse inclusion, let
            $E\in\mathcal{E}_\rho$; then by construction
            $\{x^{-1}y\mid(x,y)\in E\}\in\mathfrak{B}$, so
            $E\in\mathcal{E}_L$.

      \item \emph{Necessity.} Suppose there exists a bornology
            $\mathfrak{B}$ with $\mathcal{E}_L=\mathcal{E}_\rho$,
            and assume for contradiction that $\rho$ is not
            bornological. Then there exists $C>0$ such that for every
            $S_C>0$ one can find $(x,y)$ with $\rho(x,y)<C$ and
            $g\in G$ with $\rho(gx,gy)\geq S_C$. Consider the
            controlled set
            \[
                E_C=\{(x,y)\in X\times X\mid\rho(x,y)\leq C\}.
            \]
            Then the set $\{(gx,gy)\}$ belongs to $\mathcal{E}_L$
            but not to $\mathcal{E}_\rho$, contradicting
            $\mathcal{E}_L=\mathcal{E}_\rho$.\qedhere
  \end{itemize}
\end{proof}

We now begin to investigate metrizability conditions. First we
establish the following proposition.

\begin{proposition}\label{конечный диаметр, если гр структура метризуема}
  Let $(G,\mathfrak{B},\mathcal{E}_L)$ be a bornological group. If
  $\mathcal{E}_L$ is metrizable by a metric $\rho$, then for every
  subset $B\subset G$,
  \[
      B\in\mathfrak{B}\ \Leftrightarrow\
      \operatorname{diam}_\rho(B)<+\infty.
  \]
\end{proposition}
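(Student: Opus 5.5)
The plan is to reduce both directions to the fact, proved in the first proposition of Section~2.1, that $\mathfrak{B}(\mathcal{E}_L)=\mathfrak{B}$. We combine it with the observation that for a bounded coarse structure $\mathcal{E}_\rho$ the coarsely bounded sets are exactly the sets of finite $\rho$-diameter. Since $\mathcal{E}_L=\mathcal{E}_\rho$ by hypothesis, we get $\mathfrak{B}=\mathfrak{B}(\mathcal{E}_L)=\mathfrak{B}(\mathcal{E}_\rho)$. It then remains to identify $\mathfrak{B}(\mathcal{E}_\rho)$ with the sets of finite diameter.

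\emph{Step 1: $\mathfrak{B}(\mathcal{E}_\rho)=\{B\mid\operatorname{diam}_\rho(B)<+\infty\}$.} Assume first that $\operatorname{diam}_\rho(B)=D<+\infty$. Then every pair $(x,x')\in B\times B$ satisfies $\rho(x,x')\leq D<D+1$. By Definition~\ref{def-boundedcoarsestructurt} with $C=D+1$, we get $B\times B\in\mathcal{E}_\rho$. Conversely, assume $B\times B\in\mathcal{E}_\rho$. Then some $C>0$ satisfies $\rho(x,x')<C$ for all $x,x'\in B$, so $\operatorname{diam}_\rho(B)\leq C$. The empty set needs no separate treatment, since it lies in every bornology by the subset axiom and has diameter $0$ (or $-\infty$) by convention.

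\emph{Step 2: combine.} Let $B\in\mathfrak{B}$. Then $B\in\mathfrak{B}(\mathcal{E}_L)$, which means $B\times B\in\mathcal{E}_L=\mathcal{E}_\rho$, and Step~1 gives finite diameter. In the other direction, let $\operatorname{diam}_\rho(B)<\infty$. Step~1 gives $B\times B\in\mathcal{E}_\rho=\mathcal{E}_L$, so $B\in\mathfrak{B}(\mathcal{E}_L)=\mathfrak{B}$.

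If one prefers not to cite the earlier proposition, the equality $\mathfrak{B}(\mathcal{E}_L)=\mathfrak{B}$ can be checked directly. If $B\times B\in\mathcal{E}_L$, then $B^{-1}B\in\mathfrak{B}$. Fixing $b_0\in B$ gives $B=b_0\cdot(b_0^{-1}B)\subset b_0\cdot B^{-1}B$, and this set is bounded by the product and subset axioms. Conversely, if $B\in\mathfrak{B}$, then $B^{-1}B\in\mathfrak{B}$ by the inverse and product axioms, so $B\times B\in\mathcal{E}_L$.

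There is no serious obstacle here. The only point needing care is to keep the two notions of boundedness apart, the bornological one and the coarse one. The link between them goes through $B^{-1}B$ rather than $B$ itself, and the earlier proposition handles exactly this.
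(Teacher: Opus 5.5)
Your proof is correct and is essentially the paper's argument: both transport boundedness through the hypothesis $\mathcal{E}_L=\mathcal{E}_\rho$ and then identify the coarsely bounded sets of $\mathcal{E}_\rho$ with the sets of finite $\rho$-diameter. The one difference is the witness set. The paper uses $\{e\}\times B$, whose image $\{x^{-1}y\}$ is literally $B$, so it needs no appeal to $\mathfrak{B}(\mathcal{E}_L)=\mathfrak{B}$, only the triangle inequality to pass between a $\rho$-ball about $e$ and finite diameter. You use $B\times B$, which matches the diameter directly but requires the $B^{-1}B$ step you supply; your choice $C=D+1$ also avoids the paper's small slip of writing $\rho(x,y)<C$ with $C=\operatorname{diam}_\rho(B)$.
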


\begin{proof}
  Suppose $B\in\mathfrak{B}$. Consider the set
  $E_B=\{(e,x)\mid x\in B\}$. Since
  $\{e^{-1}x\mid x\in B\}=B\in\mathfrak{B}$, we have
  $E_B\in\mathcal{E}_L$. By metrizability,
  \[
      \exists\,C>0\colon\forall\,(e,x)\in E_B,\quad\rho(e,x)<C,
  \]
  so $\operatorname{diam}_\rho(B)\leq 2C$.

  \bigskip

  Conversely, suppose $B\subset G$ and
  $\operatorname{diam}_\rho(B)<+\infty$. Set
  $C=\operatorname{diam}_\rho(B)$ and fix $x\in B$. Then
  $\rho(x,y)<C$ for all $y\in B$, so
  \[
      \rho(e,y)\leq\rho(e,x)+C\qquad\forall\,y\in B.
  \]
  Hence $E=\{(e,y)\mid y\in B\}\in\mathcal{E}_L$, and therefore
  $B\in\mathfrak{B}$.
\end{proof}

This proposition leads naturally to the following definition.

\begin{definition}
  A bornology $\mathfrak{B}$ is called \emph{metrizable} if there
  exists a metric $\rho$ such that $B\in\mathfrak{B}$ if and only if
  $\operatorname{diam}_\rho(B)<\infty$.
\end{definition}

Given a bornological metric $\rho$ on $G$, define
\[
    \rho^+(x,y):=\sup_{g\in G}\rho(gx,gy).
\]
As we show below, this is a left-invariant metric coarsely equivalent
to~$\rho$.

\begin{theorem}\label{th-leftinvmetric}
  In every coarse-equivalence class of bornological metrics on a
  group $G$ there exists a left-invariant metric.
\end{theorem}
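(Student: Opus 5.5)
Given a bornological metric $\rho$ on $G$, I will take as the representative the metric $\rho^+(x,y):=\sup_{g\in G}\rho(gx,gy)$ introduced just before the statement. I will then show two things: $\rho^+$ is a left-invariant metric, and the identity map $(G,\rho)\to(G,\rho^+)$ is a coarse equivalence, in fact with $\mathcal{E}_{\rho}=\mathcal{E}_{\rho^+}$. Since $\rho^+$ is again bornological (trivially, being left-invariant), it lies in the same class, which proves the theorem.

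\emph{Step 1: $\rho^+$ is finite.} Fix $x,y$ and set $C:=\rho(x,y)+1$. Bornologicity gives a constant $S_C$ with $\rho(gx,gy)<S_C$ for all $g\in G$, so $\rho^+(x,y)\le S_C<\infty$.

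\emph{Step 2: metric axioms and invariance.} Taking $g=e$ shows $\rho^+\ge\rho$, so $\rho^+(x,y)=0$ forces $x=y$. Symmetry is immediate. For the triangle inequality, $\rho(gx,gz)\le\rho(gx,gy)+\rho(gy,gz)\le\rho^+(x,y)+\rho^+(y,z)$, and taking the supremum over $g$ gives $\rho^+(x,z)\le\rho^+(x,y)+\rho^+(y,z)$. Left invariance follows by reindexing the supremum: $\rho^+(ax,ay)=\sup_g\rho(gax,gay)=\sup_{g'}\rho(g'x,g'y)=\rho^+(x,y)$, since $g\mapsto ga$ is a bijection of $G$.

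\emph{Step 3: equality of coarse structures.} From $\rho\le\rho^+$ we get $\mathcal{E}_{\rho^+}\subset\mathcal{E}_\rho$. Conversely, if $E\in\mathcal{E}_\rho$ with $\rho<C$ on $E$, then bornologicity gives $\rho(gx,gy)<S_C$ for all $(x,y)\in E$ and all $g\in G$. Hence $\rho^+\le S_C$ on $E$, so $E\in\mathcal{E}_{\rho^+}$. Therefore $\mathcal{E}_\rho=\mathcal{E}_{\rho^+}$, and the identity map is a coarse equivalence: it is bornologous in both directions, proper because the bounded sets agree, and the compositions are literally the identity.

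The main point where care is needed is Step 1. The supremum could a priori be infinite, and the definition of bornologicity is used with the strict inequality $\rho(x,y)<C$, which is why I take $C=\rho(x,y)+1$ rather than $C=\rho(x,y)$. Everything else is a routine verification.
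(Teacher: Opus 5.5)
Your proof is correct and follows the paper's argument: you use the same metric $\rho^+(x,y)=\sup_{g}\rho(gx,gy)$, verify the metric axioms and left invariance in the same way, and take the identity map as the coarse equivalence. Your version is also more complete than the paper's, since it checks that $\rho^+$ is finite (which the paper omits) and derives $\mathcal{E}_\rho=\mathcal{E}_{\rho^+}$ explicitly from $\rho\le\rho^+$ and bornologicity, whereas the paper only asserts that the identity is bornologous and proper.
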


\begin{proof}
  \begin{itemize}
      \item \emph{$\rho^+$ is a metric.} Symmetry, positive
            definiteness, and non-degeneracy are clear. The triangle
            inequality:
            \begin{equation*}
              \begin{split}
                \rho^+(x,z)
                  &=\sup_{g\in G}\rho(gx,gz)
                   \leq\sup_{g\in G}\bigl(\rho(gx,gy)+\rho(gy,gz)\bigr)\\
                  &\leq\sup_{g\in G}\rho(gx,gy)
                        +\sup_{g\in G}\rho(gy,gz)
                   =\rho^+(x,y)+\rho^+(y,z).
              \end{split}
            \end{equation*}
      \item \emph{Left invariance of $\rho^+$} follows immediately
            from the definition.
      \item \emph{$(G,\mathcal{E}_\rho)\stackrel{c}{\sim}
            (G,\mathcal{E}_{\rho^+})$.} It suffices to show that the
            identity map
            $\operatorname{id}\colon(G,\mathcal{E}_\rho)\to
            (G,\mathcal{E}_{\rho^+})$ is a coarse equivalence.
            Bornologicity of $\operatorname{id}$ in both directions
            follows from the bornologicity of $\rho$ and $\rho^+$.
            Properness follows from the fact that both bornologies
            consist of sets of finite diameter
            (Proposition~\ref{свойство метрики}). Closeness of the
            compositions to the identity is clear.\qedhere
  \end{itemize}
\end{proof}

It is now clear that bornological metrics are in bijective
correspondence with metrizable bornologies: metrizability of a
bornology is equivalent to metrizability of the left bornological
coarse structure coinciding with $\mathcal{E}_\rho$.

As we now show, metrizability of the bornological coarse structure is
equivalent to the existence of a countable basis for the bornology.

\begin{definition}\label{def-basisbor}
  A bornology $\mathfrak{B}$ has a \emph{countable basis} if there
  exists a sequence $\{B_i\mid B_i\in\mathfrak{B}\}_{i=1}^{\infty}$,
  called a basis of the bornology, such that for every
  $B\in\mathfrak{B}$ there exists $N$ with $B\subset\bigcup_{j=1}^{N}B_j$.
\end{definition}

\begin{proposition}\label{связь}
  For a group $G$ the following are equivalent:
  \begin{enumerate}
      \item $\mathcal{E}_L$ is countably generated;
      \item $\mathcal{E}_R$ is countably generated;
      \item $\mathfrak{B}$ has a countable basis.
  \end{enumerate}
\end{proposition}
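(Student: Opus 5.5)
I would prove the cycle (3)$\Rightarrow$(1)$\Rightarrow$(3), and get (2) from (1) using the inversion map $\theta\colon x\mapsto x^{-1}$ from Proposition~\ref{правое и левое}. The map $\theta$ is a bijection, and the computation in that proof shows that $E\in\mathcal{E}_L$ if and only if $(\theta\times\theta)(E)\in\mathcal{E}_R$. Hence $\theta\times\theta$ maps $\mathcal{E}_L$ onto $\mathcal{E}_R$. Since it commutes with unions, inverses and compositions, it sends a generating family of $\mathcal{E}_L$ to a generating family of $\mathcal{E}_R$. This gives (1)$\Leftrightarrow$(2), and it remains to relate (1) and (3).

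\textbf{(3)$\Rightarrow$(1).} Let $\{B_i\}$ be a countable basis of $\mathfrak{B}$. Replacing $B_i$ by $B_1\cup\dots\cup B_i$, I may assume the basis is increasing. Put
\[
  E_i:=\{(x,y)\mid x^{-1}y\in B_i\}=\bigcup_{x\in G}\{x\}\times xB_i .
\]
Each $E_i$ is in $\mathcal{E}_L$. Now take any $E\in\mathcal{E}_L$. Its difference set $A_E=\{x^{-1}y\mid (x,y)\in E\}$ lies in $\mathfrak{B}$, so $A_E\subset B_N$ for some $N$, and therefore $E\subset E_N$. Thus every controlled set is a subset of some $E_N$. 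Consequently $\mathcal{E}_L$ is contained in the coarse structure generated by the $E_i$, and the reverse inclusion is trivial. So $\mathcal{E}_L=\langle E_i\rangle$.

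\textbf{(1)$\Rightarrow$(3).} Suppose $\mathcal{E}_L=\langle S_i\rangle$ with $S_i$ countable. Enumerate the countably many generators as $F_1,F_2,\dots$, and form the increasing sequence
\[
  D_n:=\Bigl(\operatorname{diag}\cup\bigcup_{k\le n}(F_k\cup F_k^{-1})\Bigr)^{\circ n}.
\]
The family $\{E\mid E\subset D_n\text{ for some }n\}$ contains each $F_k$, is closed under subsets, finite unions (because the $D_n$ increase), inverses and composition ($D_n\circ D_m\subset D_{n+m}$), and contains the diagonal. So it is a coarse structure. By minimality it contains $\mathcal{E}_L$, and since each $D_n\in\mathcal{E}_L$ it equals $\mathcal{E}_L$.

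The candidate basis is $B_n:=A_{D_n}=\{x^{-1}y\mid(x,y)\in D_n\}$, which lies in $\mathfrak{B}$ because $D_n\in\mathcal{E}_L$. Given $B\in\mathfrak{B}$, consider $E=\{e\}\times B$. Its difference set is $B$, so $E\in\mathcal{E}_L$, hence $E\subset D_n$ for some $n$, and then $B=A_E\subset A_{D_n}=B_n$. So $\{B_n\}$ is a countable basis, and since it is increasing the finite-union clause of Definition~\ref{def-basisbor} is automatic.

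The main obstacle is the description of the generated coarse structure in (1)$\Rightarrow$(3): one has to check carefully that the sets dominated by the $D_n$ already form a coarse structure. In particular, one must include the diagonal and inverses before composing so that the $D_n$ are increasing, and verify $D_n\circ D_m\subset D_{n+m}$. Once that is in place, both directions reduce to passing between $E$ and its difference set $A_E$.
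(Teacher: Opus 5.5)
Your proof is correct. Your (3)$\Rightarrow$(1) is the paper's argument. The other two steps take a different route.

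For (1)$\Rightarrow$(3), the paper invokes Roe's metrizability theorem (Theorem~\ref{roe-metrizable}) to obtain a metric $\rho$ with $\mathcal{E}_L=\mathcal{E}_\rho$. It then uses the proposition identifying $\mathfrak{B}$ with the sets of finite $\rho$-diameter, and takes the balls $\{g\mid\rho(e,g)<n+1\}$ as the basis. You instead build by hand an increasing sequence $D_n$ of controlled sets into which every element of $\mathcal{E}_L$ fits, and read off the basis as the difference sets $A_{D_n}$, with no metric involved. In effect you reprove the part of Roe's theorem that is needed here. Your check that the sets dominated by some $D_n$ form a coarse structure is complete: the diagonal gives $D_n\subset D_{n+1}$, the base set is symmetric, and $D_n\circ D_m\subset D_{n+m}$.

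For (1)$\Leftrightarrow$(2), the paper argues that $\mathcal{E}_L$ and $\mathcal{E}_R$ are coarsely equivalent and hence metrizable simultaneously. This tacitly uses that metrizability (equivalently, countable generation) is preserved by coarse equivalences, which is true but not proved in the paper. You observe instead that $\theta\times\theta$ is an actual isomorphism of coarse structures. Indeed $E\in\mathcal{E}_L$ iff $(\theta\times\theta)(E)\in\mathcal{E}_R$, and since $\theta$ is bijective, $\theta\times\theta$ preserves the diagonal, inverses and compositions. So it transports generating families directly and avoids that unproved step.

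Your version is therefore self-contained and slightly sharper. The paper's is shorter given the cited theorem, and it keeps the metric $\rho$ in view, which the paper relies on in the subsequent corollary that $\mathfrak{B}$ has a countable basis iff it is metrizable.
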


\begin{proof}
  Define the map $\Omega\colon X\times X\to X$ by $\Omega(x,y)=x^{-1}y$.
  \begin{itemize}
      \item[$3\Rightarrow 1$.] Let $\{B_i\}_{i=1}^{\infty}$ be a
      countable basis of $\mathfrak{B}$. Set
      \[
          E_n:=\{(x,y)\mid\Omega(x,y)\in B_n\}.
      \]
      Clearly $\langle E_n\rangle\subset\mathcal{E}_L$. Let
      $E\in\mathcal{E}_L$; then $\Omega(E)\subset\bigcup_{i=1}^{k}B_i$
      for some $k$, so $E\subset\bigcup_{i=1}^{k}E_i$. Hence
      $\mathcal{E}_L\subset\langle E_n\rangle$.

      \item[$1\Rightarrow 3$.] Since $\mathcal{E}_L$ is countably
      generated it is metrizable (Theorem~\ref{roe-metrizable}), so
      Proposition~\ref{конечный диаметр, если гр структура метризуема}
      applies: the sets $E_n(e)=\{g\mid\rho(e,g)<n+1\}$ form a
      countable basis of $\mathfrak{B}$.

      \item[$1\Leftrightarrow 2$.] By
      Proposition~\ref{правое и левое} the coarse spaces are
      equivalent, hence their coarse structures are metrizable
      simultaneously. By Theorem~\ref{roe-metrizable} this is
      equivalent to countable generation.\qedhere
  \end{itemize}
\end{proof}

The equivalence between a bornology having a countable basis and
being metrizable was noted in~\cite[Proposition~7.1]{ma2021endslargescalegroups},
where an explicit formula for the corresponding metric is also given.

\begin{corollary}\label{критерий метризуемости борнологии}
  A bornology $\mathfrak{B}$ has a countable basis if and only if it
  is metrizable.
\end{corollary}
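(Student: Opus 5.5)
The corollary states that $\mathfrak{B}$ has a countable basis exactly when it is metrizable. I will obtain it by passing through the left bornological coarse structure $\mathcal{E}_L$, and prove each implication separately.

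\emph{Countable basis $\Rightarrow$ metrizable.} Suppose $\mathfrak{B}$ has a countable basis $\{B_i\}$.
\begin{enumerate}
\item By Proposition~\ref{связь} (implication $3\Rightarrow1$), $\mathcal{E}_L$ is countably generated.
\item By Theorem~\ref{roe-metrizable}, $\mathcal{E}_L=\mathcal{E}_\rho$ for some metric $\rho$ on $G$.
\item Proposition~\ref{конечный диаметр, если гр структура метризуема} then gives $B\in\mathfrak{B}\Leftrightarrow\operatorname{diam}_\rho(B)<\infty$.
\end{enumerate}
This is precisely the definition of a metrizable bornology.

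\emph{Metrizable $\Rightarrow$ countable basis.} Suppose $\rho$ is a metric with $B\in\mathfrak{B}\Leftrightarrow\operatorname{diam}_\rho(B)<\infty$. I will not try to show $\mathcal{E}_L=\mathcal{E}_\rho$, since that would require $\rho$ to be bornological. Instead I build a basis directly from balls.

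Put $B_n:=\{g\mid\rho(e,g)<n\}$ for $n\in\mathbb{N}$. Each $B_n$ has diameter at most $2n$, so $B_n\in\mathfrak{B}$.

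Now let $B\in\mathfrak{B}$ be arbitrary, with $D:=\operatorname{diam}_\rho(B)<\infty$. If $B=\emptyset$ there is nothing to show. Otherwise fix $x\in B$; by the triangle inequality every $y\in B$ satisfies $\rho(e,y)\le\rho(e,x)+D$. Hence $B\subset B_N$ for any integer $N>\rho(e,x)+D$. So $\{B_n\}$ is a countable basis in the sense of Definition~\ref{def-basisbor}.

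The only point needing care is in this second direction. A metric $\rho$ that metrizes the bornology need not be bornological, so Proposition~\ref{связь} cannot be applied through $\rho$ itself. The direct ball argument avoids this, and also uses nothing about the group structure.
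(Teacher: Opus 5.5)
Your proof is correct and follows the paper's route. The countable-basis $\Rightarrow$ metrizable direction goes through Proposition~\ref{связь}, Theorem~\ref{roe-metrizable} and Proposition~\ref{конечный диаметр, если гр структура метризуема} exactly as the paper does, and for the converse the paper likewise uses the balls $\{g\mid\rho(e,g)<n+1\}$ from the end of the proof of Proposition~\ref{связь}; your explicit triangle-inequality argument makes clear that this step needs only the diameter characterization of $\mathfrak{B}$, not that $\rho$ be bornological.
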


\begin{proof}
  If $\mathfrak{B}$ is metrizable then it has a countable basis (see
  the end of the proof of Proposition~\ref{связь}). Conversely, if
  $\mathfrak{B}$ has a countable basis, then by
  Proposition~\ref{связь} the structure $\mathcal{E}_L$ is countably
  generated, hence metrizable, and
  Proposition~\ref{конечный диаметр, если гр структура метризуема}
  gives metrizability of~$\mathfrak{B}$.
\end{proof}

For finitely generated groups the word metric corresponds to the
minimal bornology.

\begin{proposition}\label{для конечно порожденных групп}
  If $G$ is finitely generated, then $\mathfrak{B}_{\min}$ is
  generated by the word metric.
\end{proposition}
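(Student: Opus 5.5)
We prove that for a finitely generated group $G$ with finite generating set $\mathcal{X}$ and word metric $\rho_{\mathcal{X}}(g,h)=\|g^{-1}h\|$, the family of sets of finite $\rho_{\mathcal{X}}$-diameter is exactly $\mathfrak{B}_{\min}$. By definition of a metrizable bornology, this is what "generated by the word metric" means. Equivalently, in the language of Proposition~\ref{свойство метрики}, $\mathcal{E}_L$ built from $\mathfrak{B}_{\min}$ coincides with $\mathcal{E}_{\rho_{\mathcal{X}}}$. We establish both formulations.

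\emph{Bounded sets are finite sets.} One inclusion is immediate: a finite set has finite diameter, since all pairwise distances are finite real numbers. For the converse, let $B$ have finite diameter $D$ and fix $b_0\in B$. Every $b\in B$ satisfies $\|b_0^{-1}b\|\le D$. The ball of radius $D$ in the word norm contains at most $(2|\mathcal{X}|+1)^{D}$ elements, because every element of norm at most $D$ is a word of length at most $D$ in $\mathcal{X}\cup\mathcal{X}^{-1}$. So $b_0^{-1}B$ is finite, and hence $B$ is finite. This is precisely properness of the word metric, which the paper notes after the definition of the word norm.

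\emph{The two coarse structures coincide.} Since $\rho_{\mathcal{X}}$ is left-invariant, it is trivially bornological with $S_C=C$. Proposition~\ref{свойство метрики} then identifies $\mathcal{E}_{\rho_{\mathcal{X}}}$ with the left coarse structure of the bornology of finite-diameter sets, which we have just shown is $\mathfrak{B}_{\min}$. One can also check this directly. If $E\in\mathcal{E}_{\rho_{\mathcal{X}}}$, then $\{x^{-1}y\mid(x,y)\in E\}$ lies in a ball of finite radius, so it is finite. Conversely, if that set is finite, its elements have bounded norm, so $E$ is controlled. This is the same argument as in the $\mathbb{Z}$ example.

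The only step with real content is the finiteness of word-metric balls. It is a routine counting argument, so we expect no genuine obstacle.
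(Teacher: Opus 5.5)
Your proof is correct and follows the paper's argument: you prove both inclusions between finite sets and sets of finite word-metric diameter, and the nontrivial direction rests on the finiteness of word-metric balls. You make explicit the counting bound that the paper compresses into ``by definition of the word metric''. Your additional check that $\mathcal{E}_L=\mathcal{E}_{\rho_{\mathcal{X}}}$ is correct but goes beyond what the statement requires.
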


\begin{proof}
  The bornology $\mathfrak{B}_{\min}$ consists of finite sets. Every
  finite set has finite diameter with respect to the word metric.
  Conversely, every set of finite diameter with respect to the word
  metric in a finitely generated group is finite, by definition of
  the word metric.
\end{proof}

For countably generated groups the construction of the corresponding
metric is more involved; this was addressed in~\cite{smith2006}.

\bigskip

Combining Propositions~\ref{конечный диаметр, если гр структура метризуема},
\ref{связь}, and~\ref{свойство метрики}, we obtain the following
description of the relation between metrizability and coarse
structures.

\begin{theorem}\label{th-metrizable}
  \begin{enumerate}
      \item If the coarse structure $\mathcal{E}$ on a group $G$ is
            countably generated and strongly $G$-invariant on the
            left, then $\mathfrak{B}(\mathcal{E})$ is a metrizable
            bornology.
      \item If the bornology $\mathfrak{B}(\mathcal{E})$ on a
            bornological group $G$ is metrizable, then $\mathcal{E}$
            is countably generated and strongly $G$-invariant on the
            left.
  \end{enumerate}
\end{theorem}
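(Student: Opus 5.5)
For part (1), I would first apply Proposition~\ref{главное предложние} (with Corollary~\ref{следствие про сильную g-инвариантность}). This needs coarse connectedness, which is not listed among the hypotheses. I would either read it as implicit in the standing assumptions on $G$ or add it. Granting it, strong left $G$-invariance gives that $\mathfrak{B}:=\mathfrak{B}(\mathcal{E})$ is a bornology and that $\mathcal{E}=\mathcal{E}_L$ for this bornology. Since $\mathcal{E}$ is countably generated, Theorem~\ref{roe-metrizable} produces a metric $\rho$ with $\mathcal{E}_L=\mathcal{E}_\rho$. Proposition~\ref{конечный диаметр, если гр структура метризуема} then says exactly that $B\in\mathfrak{B}$ iff $\operatorname{diam}_\rho(B)<\infty$, so $\mathfrak{B}$ is metrizable. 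Alternatively, I would extract a countable basis via Proposition~\ref{связь} ($1\Rightarrow3$) and conclude with Corollary~\ref{критерий метризуемости борнологии}.

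For part (2), I would interpret $\mathcal{E}$ as the left bornological coarse structure $\mathcal{E}_L$ of the bornological group $(G,\mathfrak{B})$. This is the only reading under which the statement can hold, since an arbitrary coarse structure with a prescribed family of bounded sets need not be countably generated. By Corollary~\ref{критерий метризуемости борнологии}, metrizability of $\mathfrak{B}$ gives a countable basis $\{B_i\}$. Then Proposition~\ref{связь} ($3\Rightarrow1$) gives countable generation of $\mathcal{E}_L$, with explicit generators $E_n=\{(x,y)\mid x^{-1}y\in B_n\}$.

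Strong left $G$-invariance of $\mathcal{E}_L$ is the Example following Definition~\ref{def-g-inv}, and I would verify it directly. If $E=\bigsqcup E_i$ and $g_i\in G$, then $(g_ix)^{-1}(g_iy)=x^{-1}y$. Hence $\{u^{-1}v\mid (u,v)\in\bar gE\}=\{x^{-1}y\mid(x,y)\in E\}\in\mathfrak{B}$, so $\bar gE\in\mathcal{E}_L$.

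As a consistency check for (2), I would note that Proposition~\ref{свойство метрики} and Theorem~\ref{th-leftinvmetric} supply a left-invariant metric $\rho^+$ whose bounded coarse structure is $\mathcal{E}_L$. This gives metrizability directly, and Theorem~\ref{roe-metrizable} again yields countable generation.

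The main obstacle is not computational but interpretive. Part (1) needs coarse connectedness to invoke Proposition~\ref{главное предложние}, and part (2) needs $\mathcal{E}$ identified with $\mathcal{E}_L$. I would state both conventions explicitly at the start and then chain the cited results as above.
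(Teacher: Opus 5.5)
Your chain of citations is essentially the paper's, and part (2) is fine. Part (1), however, has a step that fails: taking $\mathcal{E}=\mathcal{E}_L$ from Proposition~\ref{главное предложние}. Your main route and your alternative through Proposition~\ref{связь} ($1\Rightarrow 3$) both depend on it. Strong $G$-invariance as defined in Definition~\ref{def-g-inv} translates each piece of a \emph{disjoint} decomposition only once, and that is too weak. Take $G=\mathbb{Z}$ and let $\mathcal{E}$ consist of all $E\subset\mathbb{Z}\times\mathbb{Z}$ with $E\setminus\operatorname{diag}$ finite. This is a coarsely connected coarse structure, countably generated by the singletons $\{(m,n)\}$; a generating metric is $\rho(x,y)=\max(|x|,|y|)$ for $x\neq y$. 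It is also strongly $G$-invariant, since translating the pieces of a partition of $E$ cannot increase the number of off-diagonal pairs. Its bounded sets are the finite sets, so $\mathcal{E}_L$ is the structure of $|x-y|$, which contains $\{(n,n+1)\mid n\in\mathbb{Z}\}\notin\mathcal{E}$. The ``clear'' inclusion $\mathcal{E}_L\subset\mathcal{E}$ would need $\bigcup_{g\in G}gE\in\mathcal{E}$, which the definition does not provide. So Theorem~\ref{roe-metrizable} does not give you a metric with $\mathcal{E}_L=\mathcal{E}_\rho$, and Proposition~\ref{конечный диаметр, если гр структура метризуема} cannot be invoked. The paper's own proof has the same defect in another form: its assertion that the metric generating $\mathcal{E}$ is left bornological fails for the $\rho$ above, because $\rho(0,1)=1$ while $\rho(g,g+1)\to\infty$ as $|g|\to\infty$.

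The repair is short and bypasses $\mathcal{E}_L$. The bornology half of Proposition~\ref{главное предложние} is sound, because it only translates distinct singletons of a single controlled set. Using coarse connectedness, for bounded $A$ translate each $(a,e)$ by $a^{-1}$ to get $\{e\}\times A^{-1}\in\mathcal{E}$. For bounded $A,B$ the set $A^{-1}\times B=(A^{-1}\times\{e\})\circ(\{e\}\times B)$ is controlled, and translating each $(a^{-1},b)$ by $a$ gives $\{e\}\times AB\in\mathcal{E}$. Then Theorem~\ref{roe-metrizable} gives a finite-valued $\rho$ with $\mathcal{E}=\mathcal{E}_\rho$. Since $\mathfrak{B}(\mathcal{E}_\rho)$ is precisely the family of sets of finite $\rho$-diameter, this \emph{is} metrizability of $\mathfrak{B}(\mathcal{E})$. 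You were right that coarse connectedness must be added: the structure of all subsets of the diagonal is countably generated and strongly invariant, but its bounded sets are not closed under unions. In part (2), your reading $\mathcal{E}=\mathcal{E}_L$ is the one the paper uses tacitly. Your direct check $(g_ix)^{-1}(g_iy)=x^{-1}y$ is cleaner than the paper's detour through Proposition~\ref{свойство метрики}. Drop the ``consistency check'', though. A metric witnessing metrizability of $\mathfrak{B}$ need not generate $\mathcal{E}_L$ (Example~\ref{пример для Гейзенберга}), and for such a metric $\rho^+$ can be infinite. A metric generating $\mathcal{E}_L$ is available only after Proposition~\ref{связь} and Theorem~\ref{roe-metrizable}, so the check is circular.
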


\begin{proof}
  \begin{enumerate}
      \item Since $\mathcal{E}$ is countably generated it is
            metrizable. Since $\mathcal{E}$ is strongly
            $G$-invariant, the corresponding metric is left
            bornological. By Corollary~\ref{следствие про сильную
            g-инвариантность}, $\mathfrak{B}(\mathcal{E})$ is a
            bornology, and by Proposition~\ref{конечный диаметр,
            если гр структура метризуема} it is metrizable.
      \item If $\mathfrak{B}(\mathcal{E})$ is metrizable then it has
            a countable basis. By Proposition~\ref{связь},
            $\mathcal{E}$ is countably generated, hence metrizable.
            By Proposition~\ref{свойство метрики} the corresponding
            metric is left bornological, so $\mathcal{E}$ is strongly
            $G$-invariant on the left.\qedhere
  \end{enumerate}
\end{proof}

\subsection{Equivalence of bornological groups}

We carry over the coarse equivalence relation defined above for
coarse spaces to bornological groups.

\begin{definition}
  A bornological group $(G,\mathfrak{B})$ is \emph{coarsely
  equivalent} to a bornological group $(H,\mathfrak{R})$, written
  $(G,\mathfrak{B})\stackrel{c}{\sim}(H,\mathfrak{R})$, if $G$ and
  $H$ are coarsely equivalent as coarse spaces with structures
  $\mathcal{E}_{L_{\mathfrak{B}}}$ and $\mathcal{E}_{L_{\mathfrak{R}}}$
  respectively.
\end{definition}

For bornologies with a countable basis this definition is equivalent
to coarse equivalence of the corresponding metric spaces.

\begin{lemma}\label{ПОЛЕЗНАЯ ЛЕММА}
  Let $f$ be a surjective coarse map from a bornological group
  $(G,\mathfrak{B})$ with countable basis to a bornological group
  $(H,\mathfrak{R})$ with countable basis. Then
  $f(\mathfrak{B})=\mathfrak{R}$.
\end{lemma}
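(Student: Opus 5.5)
The plan is to prove the two inclusions $f(\mathfrak{B})\subset\mathfrak{R}$ and $\mathfrak{R}\subset f(\mathfrak{B})$ separately. Here $f(\mathfrak{B})=\{f(B)\mid B\in\mathfrak{B}\}$. The first inclusion uses that $f$ is bornologous, the second that $f$ is proper and surjective. Throughout, $G$ and $H$ carry $\mathcal{E}_{L_\mathfrak{B}}$ and $\mathcal{E}_{L_\mathfrak{R}}$. By the proposition that $\mathfrak{B}(\mathcal{E}_L)=\mathfrak{B}$, the coarsely bounded sets are exactly $\mathfrak{B}$, respectively $\mathfrak{R}$. So "bounded" means the same thing in the bornological and in the coarse sense, and I will use this freely.

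\emph{Step 1: $f(\mathfrak{B})\subset\mathfrak{R}$.} Let $B\in\mathfrak{B}$. Then $B\times B\in\mathcal{E}_{L_\mathfrak{B}}$ because $B$ is coarsely bounded. Since $f$ is bornologous,
\[
  (f\times f)(B\times B)=f(B)\times f(B)\in\mathcal{E}_{L_\mathfrak{R}}.
\]
Hence $f(B)$ is coarsely bounded, i.e.\ $f(B)\in\mathfrak{R}$. Alternatively, one can use the metrics supplied by Corollary~\ref{критерий метризуемости борнологии} and Proposition~\ref{конечный диаметр, если гр структура метризуема}. A set of finite $\rho_G$-diameter is then mapped to a set of finite $\rho_H$-diameter, since the controlled set $\{(x,y)\mid \rho_G(x,y)\le \operatorname{diam}B\}$ is sent to a controlled set.

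\emph{Step 2: $\mathfrak{R}\subset f(\mathfrak{B})$.} Let $R\in\mathfrak{R}$. Since $f$ is proper, $B:=f^{-1}(R)$ is coarsely bounded, so $B\in\mathfrak{B}$. Since $f$ is surjective, $f(B)=f(f^{-1}(R))=R$. Thus every $R\in\mathfrak{R}$ is the image of a bounded set.

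The only point requiring care is the identification of coarsely bounded sets with the bornology in both groups, so that the definitions of proper and bornologous, which are phrased coarsely, apply directly. That identification is already given by the earlier proposition. The countable-basis hypothesis serves to make the statement equivalently readable in metric terms via Corollary~\ref{критерий метризуемости борнологии}. The argument itself does not need it, and I expect no genuine obstacle here.
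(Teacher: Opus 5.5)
Your proof is correct, and your Step~2 is exactly the paper's argument. Step~1 takes a different route.

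The paper uses the countable bases to metrize $\mathcal{E}_{L_\mathfrak{B}}$ and $\mathcal{E}_{L_\mathfrak{R}}$: a countable basis gives countable generation, hence metrizability by Theorem~\ref{roe-metrizable}. It then rewrites bornologousness as $\rho_1(x,y)<C\Rightarrow\rho_2(f(x),f(y))<S_C$ and concludes that sets of finite diameter go to sets of finite diameter. Finally it invokes the proposition identifying the bornology with the sets of finite $\rho$-diameter.

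You instead apply bornologousness directly to the controlled set $B\times B$ and use $\mathfrak{B}(\mathcal{E}_L)=\mathfrak{B}$. Your route is shorter and bypasses the metrization machinery entirely. As you note, it also shows that the countable-basis hypotheses are not needed: the lemma holds for any surjective coarse map between bornological groups carrying their left coarse structures.

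The paper's version gains nothing logically. It only phrases the argument in the metric language of the surrounding section, where the lemma is applied to metrizable bornologies.
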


As will be clear from the proof, it suffices to require surjectivity
only onto bounded sets.

\begin{proof}
  Since $f\colon(G,\mathfrak{B})\to(H,\mathfrak{R})$ is a coarse
  map, every controlled set $E\in\mathcal{E}_{L_{\mathfrak{B}}}$
  maps to a controlled set $(f\times f)(E)\in\mathcal{E}_{L_{\mathfrak{R}}}$.
  Since both bornologies have countable bases, by
  Proposition~\ref{связь} the left coarse structures are countably
  generated, hence metrizable. The condition that $f\times f$ maps
  controlled sets to controlled sets means
  \[
      \forall\,x,y\in G\;\bigl(\rho_1(x,y)<C\Rightarrow
      \rho_2(f(x),f(y))<S_C\bigr).
  \]
  Hence sets of finite diameter map to sets of finite diameter; by
  Theorem~\ref{конечный диаметр, если гр структура метризуема} this
  gives $f(\mathfrak{B})\subset\mathfrak{R}$. Since $f$ is proper,
  $f^{-1}(\mathfrak{R})\subset\mathfrak{B}$, and surjectivity gives
  $\mathfrak{R}\subset f(\mathfrak{B})$. Hence
  $\mathfrak{R}=f(\mathfrak{B})$.
\end{proof}

We now give a criterion for coarse equivalence of metrizable
bornological structures on a given group. Let $\mathfrak{B}_1$ and
$\mathfrak{B}_2$ be countably generated bornologies with countable
bases $\{B_i^1\}_{i=1}^{\infty}$ and $\{B_i^2\}_{i=1}^{\infty}$
respectively.

\begin{proposition}
  \begin{enumerate}
      \item \emph{(Necessary condition for surjections.)} For
            bornologies $\mathfrak{B}_1$ and $\mathfrak{B}_2$ with
            countable bases $\{B_i^1\}_{i=1}^{\infty}$ and
            $\{B_i^2\}_{i=1}^{\infty}$ to be coarsely equivalent via
            surjections, i.e., $(G,\mathfrak{B}_1)\stackrel{c}{\sim}
            (G,\mathfrak{B}_2)$, it is necessary that every coarse
            map sends the basis of $\mathfrak{B}_1$ to a basis of
            $\mathfrak{B}_2$.

      \item \emph{(Sufficient condition.)} Bornologies $\mathfrak{B}_1$
            and $\mathfrak{B}_2$ on $G$ with countable bases are
            coarsely equivalent, $(G,\mathfrak{B}_1)\stackrel{c}{\sim}
            (G,\mathfrak{B}_2)$, if there exists a bijection
            $f\colon G\to G$ such that the image of the basis of
            $\mathfrak{B}_1$ is a basis of $\mathfrak{B}_2$.
  \end{enumerate}
\end{proposition}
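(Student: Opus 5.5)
Both parts reduce to Lemma~\ref{ПОЛЕЗНАЯ ЛЕММА} and Proposition~\ref{связь}, together with the elementary fact that for a bornology with a countable basis the basis determines the bornology. Concretely, $\mathfrak{B}$ equals the family of subsets of finite unions $\bigcup_{j\le N}B_j$. Note also that, in reading ``image of a basis is a basis'', we allow the image family to be enlarged to its finite unions, which generates the same bornology. It is convenient to first record a reformulation. A family $\{C_i\}\subset\mathfrak{R}$ is a basis of $\mathfrak{R}$ if and only if the bornology it generates, i.e. the family of subsets of finite unions of the $C_i$ closed under the bornology axioms, equals $\mathfrak{R}$.

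\textbf{Part 1.} Let $f\colon(G,\mathfrak{B}_1)\to(G,\mathfrak{B}_2)$ be a surjective coarse map realizing the equivalence. By Lemma~\ref{ПОЛЕЗНАЯ ЛЕММА} we get $f(\mathfrak{B}_1)=\mathfrak{B}_2$; in particular each $f(B_i^1)\in\mathfrak{B}_2$. Now take any $B\in\mathfrak{B}_2$. By the lemma we can write $B=f(A)$ with $A\in\mathfrak{B}_1$. Since $\{B_i^1\}$ is a basis, $A\subset\bigcup_{j\le N}B_j^1$, and therefore
\[
B\subset\bigcup_{j\le N}f(B_j^1).
\]
Hence $\{f(B_i^1)\}$ is a countable basis of $\mathfrak{B}_2$. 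The only point needing care is the use of surjectivity onto bounded sets: we need to express every $B\in\mathfrak{B}_2$ as the image of a $\mathfrak{B}_1$-bounded set. For this we take $A=f^{-1}(B)$, which is bounded by properness of $f$ and satisfies $f(A)=B$ by surjectivity.

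\textbf{Part 2.} Let $f$ be a bijection with $\{f(B_i^1)\}$ a basis of $\mathfrak{B}_2$. The plan is to show that $f$ and $f^{-1}$ are coarse maps; since they are mutually inverse, the closeness conditions of Definition~\ref{def-coarse-equiv} hold trivially. The first step is to show $f(\mathfrak{B}_1)=\mathfrak{B}_2$. For the inclusion $\subset$, any $A\in\mathfrak{B}_1$ satisfies $A\subset\bigcup_{j\le N}B_j^1$, so $f(A)\subset\bigcup_{j\le N}f(B_j^1)\in\mathfrak{B}_2$. The inclusion $\supset$ follows symmetrically using that $f$ is a bijection. This immediately gives properness of both $f$ and $f^{-1}$.

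The hard step is bornologousness, namely that $(f\times f)(E)\in\mathcal{E}_{L_{\mathfrak{B}_2}}$ for every $E\in\mathcal{E}_{L_{\mathfrak{B}_1}}$. Here matching the bornologies alone does not suffice: a bijection preserving bounded sets need not preserve controlled sets of the form $\{(x,xb)\}$ with $x$ ranging over all of $G$. My first attempt would be to metrize via Proposition~\ref{связь} and Theorem~\ref{th-leftinvmetric}, choosing left-invariant metrics $\rho_1,\rho_2$. I would then try to show that $\rho_1(x,y)<C$ forces $\rho_2(f(x),f(y))<S_C$ by translating the pair to $(e,x^{-1}y)$. This translation step, however, requires $f$ to be compatible with left translations, for instance $f$ a group automorphism or $f(gx)=f(g)f(x)$ up to bounded error. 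I expect this to be the genuine obstacle, and I would add or extract such an equivariance hypothesis. With equivariance in hand, the set $\{x^{-1}y:(x,y)\in E\}$ is bounded, it maps to a bounded set, and so $(f\times f)(E)$ is controlled. Without it, I would fall back on Theorem~\ref{th-coarseequivstruct}: transport $\mathcal{E}_{L_{\mathfrak{B}_1}}$ by $f$ and check that the transported structure is strongly $G$-invariant with bornology $\mathfrak{B}_2$. Checking that invariance is again exactly where the argument would need equivariance of $f$.
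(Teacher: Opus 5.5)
Your Part~1 is correct and is the paper's argument. Bornologousness puts each $f(B_i^1)$ in $\mathfrak{B}_2$. For $B\in\mathfrak{B}_2$, properness plus surjectivity give $B=f(f^{-1}(B))\subset\bigcup_{j\le N}f(B_j^1)$. Here ``every coarse map'' has to be read as ``every surjective coarse map'', which is how both you and the paper use it.

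In Part~2 your diagnosis is right, and the step you leave open cannot be filled: Part~2 is false as stated. The paper's one-line proof makes exactly the inference you reject. From ``bounded sets map to bounded sets'' it concludes that $(f\times f)(E)\in\mathcal{E}_{L_{\mathfrak{B}_2}}$. But the hypothesis only gives $f(\mathfrak{B}_1)=\mathfrak{B}_2$, so $f$ identifies the two bornologies merely as abstract families of sets. The structure $\mathcal{E}_L$, by contrast, depends on how the bornology interacts with multiplication.

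Here is a counterexample. Let $G=\mathbb{Z}^3$, $L=\mathbb{Z}\times\{0\}\times\{0\}$, $P=\mathbb{Z}^2\times\{0\}$, $\mathfrak{B}_1=\langle L\rangle$ and $\mathfrak{B}_2=\langle P\rangle$.
\begin{itemize}
\item These bornologies consist of the subsets of finite unions of cosets of $L$, respectively $P$. The sets $B_i^1=\bigcup_{|a|,|b|\le i}\bigl(L+(0,a,b)\bigr)$ form a countable basis of $\mathfrak{B}_1$.
\item Both coset families are countable and consist of countably infinite sets. So a bijection $\phi\colon\mathbb{Z}^2\to\mathbb{Z}$ yields a bijection $f\colon G\to G$ carrying $L+(0,a,b)$ onto $P+(0,0,\phi(a,b))$.
\item $\{f(B_i^1)\}$ is then a basis of $\mathfrak{B}_2$, since finitely many $P$-cosets have finitely many $\phi$-preimages.
\item On the other hand, $E\in\mathcal{E}_{L_{\mathfrak{B}_1}}$ if and only if $\{p(y)-p(x)\mid(x,y)\in E\}$ is finite, where $p(u,v,w)=(v,w)$. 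Hence $p$, with section $(v,w)\mapsto(0,v,w)$, is a coarse equivalence of $(G,\mathfrak{B}_1)$ with $\mathbb{Z}^2$.
\item Likewise $q(u,v,w)=w$ shows that $(G,\mathfrak{B}_2)\stackrel{c}{\sim}\mathbb{Z}$.
\end{itemize}
Since $\mathbb{Z}^2$ and $\mathbb{Z}$ are not coarsely equivalent (asymptotic dimension $2$ versus $1$), neither are $(G,\mathfrak{B}_1)$ and $(G,\mathfrak{B}_2)$.

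So neither of your routes can succeed without an extra hypothesis. Your fallback fails as well: in this example the transported structure $(f\times f)(\mathcal{E}_{L_{\mathfrak{B}_1}})$ is coarsely connected with bounded sets $\mathfrak{B}_2$. It therefore cannot be strongly $G$-invariant, since otherwise Theorem~\ref{th-coarseequivstruct} would yield the false conclusion.

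The repair you propose is the right kind. Suppose, for instance, that $f$ is a group automorphism with $f(\mathfrak{B}_1)=\mathfrak{B}_2$. Then
\[
\{f(x)^{-1}f(y)\mid(x,y)\in E\}=f\bigl(\{x^{-1}y\mid(x,y)\in E\}\bigr)\in\mathfrak{B}_2 .
\]
The same holds for $f^{-1}$, and $f$ is then even an isomorphism of the coarse spaces.
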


We recall that the definition of a basis of a bornology does not
require minimality in any sense.

\begin{proof}
  \begin{enumerate}
      \item By Lemma~\ref{ПОЛЕЗНАЯ ЛЕММА}, every coarse map $f$
            satisfies $\mathfrak{B}_2=f(\mathfrak{B}_1)$. Hence for
            every $B\in\mathfrak{B}_2$ there exists $k$ such that
            \[
                f^{-1}(B)\subset\bigcup_{i=1}^{k}B_i^1,
            \]
            and therefore $B\subset\bigcup_{i=1}^{k}f(B_i^1)$.
            Thus $\{f(B_i^1)\}_{i=1}^{\infty}$ is a basis of
            $\mathfrak{B}_2$.

      \item We show that the bijection $f\colon(G,\mathfrak{B}_1)\to
            (G,\mathfrak{B}_2)$ and its inverse $f^{-1}$ form a
            coarse equivalence. The preimage of a bounded set is
            bounded since $f$ is a bijection sending the basis of
            $\mathfrak{B}_1$ to the basis of $\mathfrak{B}_2$.
            Similarly, bounded sets map to bounded sets, so for every
            $E\in\mathcal{E}_{L_{\mathfrak{B}_1}}$ we have
            $(f\times f)(E)\in\mathcal{E}_{L_{\mathfrak{B}_2}}$;
            hence $f$ is bornological and therefore a coarse map. The
            same argument applies to $f^{-1}$. Therefore
            $(G,\mathfrak{B}_1)\stackrel{c}{\sim}(G,\mathfrak{B}_2)$.\qedhere
  \end{enumerate}
\end{proof}

Combining Theorem~\ref{th-leftinvmetric} and Theorem~\ref{Smith}, we
obtain that on a countable group all bornologies with a countable
basis that generate bornological metrics coarsely equivalent to some
proper left-invariant metric are coarsely equivalent to one another.

\section{Examples of improper metrics}

To construct examples of improper left-invariant metrics we need to
understand which metrics defined on subgroups can be lifted to the
whole group, and how.

We reformulate the notion of a basis of a bornology
(Definition~\ref{def-basisbor}) in a form more convenient for our
purposes.

\begin{definition}\label{def-<bor>}
  Let $G$ be a group and let $A\subset G$. We denote by $\langle A\rangle$
  the minimal bornology on $G$ containing the set~$A$.
\end{definition}

By definition, $\langle A\rangle$ contains all singletons, all sets
of the form $gA$ and $Ag$, all finite powers of $A$, and all their
finite products, finite unions, and arbitrary subsets. An explicit
description of this bornology is rather cumbersome; it follows from
the definition that the minimal bornology containing $A$ coincides
with the intersection of all bornologies containing~$A$.

Definition~\ref{def-<bor>} is related to the notion of a basis
(Definition~\ref{def-basisbor}) as follows.

\begin{proposition}
  Let $G$ be a countable group. If a bornology $\mathfrak{B}$ is
  countably generated by sets $B_i$, that is,
  $\mathfrak{B}=\langle B_i\rangle$, then it has a countable basis.
\end{proposition}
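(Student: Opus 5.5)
We will build a countable basis explicitly from the generators and the elements of $G$. Let $\mathfrak{B}=\langle B_i\rangle$ be generated by $B_1,B_2,\dots$, and enumerate the countable group as $G=\{g_1,g_2,\dots\}$. For each $n$ we set
\[
    S_n:=\{g_1,\dots,g_n\}\cup\bigcup_{i=1}^{n}\bigl(B_i\cup B_i^{-1}\bigr),
\]
and define
\[
    D_n:=(S_n)^n=\underbrace{S_n\cdot S_n\cdots S_n}_{n\ \text{times}}.
\]
The claim is that $\{D_n\}_{n=1}^{\infty}$ is a countable basis of $\mathfrak{B}$ in the sense of Definition~\ref{def-basisbor}.

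The first step is to check that $D_n\in\mathfrak{B}$ for every $n$. This is immediate from the axioms of Definition~\ref{def-bornologstr}. Singletons are bounded, and so are the $B_i$ and their inverses. Finite unions of bounded sets are bounded, and so are finite products. The sequence is also increasing, $D_n\subset D_{n+1}$, provided $e_G\in S_n$. We can ensure this by taking $g_1=e_G$ in the enumeration, since then $S_n^n\subset S_{n+1}^{n+1}$. Because the sequence is increasing, the basis condition reduces to showing that every $B\in\mathfrak{B}$ lies in a single $D_N$.

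The second, and main, step is to show that the family
\[
    \mathfrak{B}':=\{B\subset G\mid B\subset D_N\text{ for some }N\}
\]
is itself a bornology containing every $B_i$. By minimality of $\langle B_i\rangle$, this gives $\mathfrak{B}\subset\mathfrak{B}'$, and so every $B\in\mathfrak{B}$ lies in some $D_N$. The axioms are verified as follows.

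\begin{itemize}
    \item \emph{Singletons.} We have $\{g_k\}\subset S_k\subset D_k$, using $e_G\in S_k$.
    \item \emph{Subsets.} This is immediate from the definition of $\mathfrak{B}'$.
    \item \emph{Inverses.} Each $S_n$ is symmetric, because $g_k^{-1}=g_m$ for some $m$. The subtlety is that this $m$ may exceed $n$. To handle it, we simply enlarge $S_n$ to include $\{g_1^{\pm1},\dots,g_n^{\pm1}\}$. Then $D_n^{-1}=(S_n^{-1})^n=D_n$.
    \item \emph{Unions.} We have $D_N\cup D_M\subset D_{\max(N,M)}$.
    \item \emph{Products.} We have $D_N\cdot D_M\subset S_K^{N+M}\subset D_K$ with $K=\max(N,M,N+M)=N+M$, again using $e_G\in S_K$.
\end{itemize}

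The step we expect to be the main obstacle is this last verification, namely closure of $\mathfrak{B}'$ under products and inverses. The choice of $D_n$ as an $n$-fold power of an increasing symmetric set containing $e_G$ is made precisely so that these closure properties hold. Countability of $G$ is what allows the singletons to be absorbed into a countable family. The remaining verifications are routine.
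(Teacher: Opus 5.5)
Your proof is correct. It rests on the same idea as the paper's: every set in $\langle B_i\rangle$ is dominated by one of countably many bounded sets built from the $B_i$ and the elements of $G$, and countability of $G$ and of $\{B_i\}$ keeps that supply countable.

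The execution differs. The paper only asserts that ``all finite combinations'' of the $B_i$ and group elements under multiplication, shifts and finite unions form a countable basis, and leaves implicit why every bounded set is covered. You use a single nested exhaustion $D_n=S_n^n$ and prove the covering property directly: $\{B : B\subset D_N \text{ for some } N\}$ is a bornology containing all $B_i$, so minimality of $\langle B_i\rangle$ applies.

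Your version gains two things.
\begin{itemize}
\item The basis is nested, so each bounded set lies in a single $D_N$ rather than in a finite union.
\item Inverses are handled correctly. The paper's list of operations, as literally stated, omits inversion, and then the claimed family is not a basis. For $G=\mathbb{Z}$ and $B_1=\mathbb{N}$, every such combination is bounded below, so $-\mathbb{N}=B_1^{-1}\in\langle B_1\rangle$ is not covered. Your symmetrization of $S_n$ closes exactly this hole.
\end{itemize}

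Two trivial points should be made explicit. First, $B_i\subset S_i\subset D_i$; this uses $e_G\in S_i$, as do the singleton and monotonicity steps, though the product step does not need it. Second, if there are only finitely many generators or $G$ is finite, pad the enumerations by repetition.
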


\begin{proof}
  All finite combinations of the sets $B_i$ and elements of $G$
  obtainable by group multiplication, shifts, and finite unions form
  a countable basis, since there are countably many sets $B_i$ and
  the group itself is countable.
\end{proof}

\medskip

Let $G$ be a countable group and let $N$ be a normal subgroup of~$G$.
Equip the quotient group $\faktor{G}{N}$ with the word metric
$\rho_{\faktor{G}{N}}$, and define a pseudometric on $G$ by
\[
    \rho_G(a,b):=\rho_{\faktor{G}{N}}(\pi(a),\pi(b)),
\]
where $\pi\colon G\to\faktor{G}{N}$ is the canonical quotient
homomorphism.

\begin{proposition}\label{реализуемость метрики на подгруппе}
  If $G$ is finitely generated, then
  \begin{equation}\label{eq-factormetric}
      (G,\langle N\rangle)\stackrel{c}{\sim}
      \Bigl(\faktor{G}{N},\rho_{\faktor{G}{N}}\Bigr).
  \end{equation}
\end{proposition}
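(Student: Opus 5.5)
The natural candidate for the coarse equivalence is the quotient map $\pi\colon G\to\faktor{G}{N}$, paired with a set-theoretic section $s\colon\faktor{G}{N}\to G$ satisfying $\pi\circ s=\operatorname{id}$. The plan is to verify that both maps are coarse with respect to $\mathcal{E}_{L}$ for $\langle N\rangle$ on $G$ and $\mathcal{E}_{\rho_{\faktor{G}{N}}}$ on the quotient, and that the two compositions are close to the identity.

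\emph{Step 1: identify the bornology $\langle N\rangle$.} Fix a finite symmetric generating set $\mathcal{X}$ of $G$, so that $\pi(\mathcal{X})$ generates $\faktor{G}{N}$. We claim that
\[
    \langle N\rangle=\bigl\{B\subset G\mid \pi(B)\text{ is finite}\bigr\}
    =\bigl\{B\mid B\subset FN\text{ for some finite }F\subset G\bigr\}.
\]
Call the right-hand family $\mathfrak{B}_N$. First, $\mathfrak{B}_N$ is a bornology. Singletons, subsets and unions are clear. For inverses, $(FN)^{-1}=NF^{-1}=F^{-1}N$ by normality. For products, $FN\cdot F'N=FF'N$, again by normality. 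Since $N\in\mathfrak{B}_N$, we get $\langle N\rangle\subset\mathfrak{B}_N$. Conversely, $FN=\bigcup_{f\in F}\{f\}\cdot N$ is a finite union of products of bounded sets, so it lies in $\langle N\rangle$. Hence the two families coincide. Equivalently, a set is bounded exactly when it has finite $\rho_G$-diameter, because balls in the word metric on $\faktor{G}{N}$ are finite.

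\emph{Step 2: describe the controlled sets.} By definition, $E\in\mathcal{E}_L$ if and only if $\pi(\{x^{-1}y\mid(x,y)\in E\})$ is finite. This is equivalent to
\[
    \sup_{(x,y)\in E}\bigl\|\pi(x)^{-1}\pi(y)\bigr\|<\infty,
\]
that is, to $\sup_{(x,y)\in E}\rho_G(x,y)<\infty$. The equivalence uses left-invariance of the word metric together with the fact that a subset of $\faktor{G}{N}$ is finite if and only if it is bounded in the word metric, since $\faktor{G}{N}$ is finitely generated. Thus $\mathcal{E}_L=\mathcal{E}_{\rho_G}$, and this step is essentially the pseudometric analogue of Proposition~\ref{свойство метрики}.

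\emph{Step 3: check the two maps and the compositions.} Bornologicity of $\pi$ is immediate from Step 2, since $\rho_{\faktor{G}{N}}(\pi x,\pi y)=\rho_G(x,y)$. Properness of $\pi$ follows because the preimage of a finite set is a finite union of cosets, which is bounded by Step 1. For $s$, we have $\rho_G(s(u),s(v))=\rho_{\faktor{G}{N}}(u,v)$, so $s$ is bornologous. It is also proper, since $s^{-1}(B)\subset\pi(B)$, which is finite when $B$ is bounded. The composition $\pi\circ s$ equals the identity. For $s\circ\pi$, note that $\rho_G(s\pi(x),x)=0$, so the set $\{(s\pi(x),x)\}$ is controlled. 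Therefore $s\circ\pi$ is close to $\operatorname{id}_G$.

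The main obstacle is Step 1, namely the explicit description of the minimal bornology $\langle N\rangle$. The paper notes that this description is cumbersome in general. Here normality is exactly what makes the family $\{B\subset FN\}$ closed under products and inverses, and finite generation of $\faktor{G}{N}$ is what turns the condition "$\pi(B)$ is finite" into boundedness in the word metric.
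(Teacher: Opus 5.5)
Your proposal is correct and follows essentially the same route as the paper. Both arguments identify $\langle N\rangle$ with the sets contained in finitely many cosets of $N$, equivalently the sets of finite $\rho_G$-diameter. Both then check that $\pi$ and a section $s$ are proper and bornologous, with $\pi\circ s=\operatorname{id}$ and $s\circ\pi$ close to the identity because $s(\pi(g))^{-1}g\in N$.

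Your version is somewhat more careful than the paper's. You verify, using normality, that $\{B\mid B\subset FN,\ F\text{ finite}\}$ is closed under products and inverses, and you prove $\mathcal{E}_L=\mathcal{E}_{\rho_G}$ explicitly. The paper only asserts both facts.
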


\begin{proof}
  \begin{itemize}
      \item Since $N$ is a normal subgroup, the bornology
            $\langle N\rangle$ consists of finite unions of singletons
            and cosets.

      \item We show that $\langle N\rangle$ coincides with the
            bornology $\mathfrak{B}_{\rho_G}$ constructed from the
            pseudometric $\rho_G$ (see Theorem~\ref{th-leftinvmetric}).
            From the description of $\langle N\rangle$ it is clear
            that $\langle N\rangle\subset\mathfrak{B}_{\rho_G}$, since
            every set in $\langle N\rangle$ has finite diameter. Now
            let $A\subset G$ satisfy $\operatorname{diam}_{\rho_G}(A)<\infty$.
            By definition of $\rho_G$ we have
            $\operatorname{diam}_{\rho_{\faktor{G}{N}}}(\pi(A))<\infty$,
            so by Proposition~\ref{для конечно порожденных групп} the
            set $\pi(A)$ is finite. Hence $A$ is contained in a
            finite union of cosets of $N$, so $A\in\langle N\rangle$.
            Therefore $\mathfrak{B}_{\rho_G}=\langle N\rangle$.

      \item We show that the quotient map $\pi\colon G\to\faktor{G}{N}$
            is a coarse map. If $B\subset\faktor{G}{N}$ is finite then
            $\pi^{-1}(B)\in\langle N\rangle$, since it is a finite
            union of cosets of $N$. Hence $\pi$ is proper. Bornologicity
            of $\pi$ follows from the description of $\langle N\rangle$
            and the definition of $\rho_G$.

      \item The map $i\colon\faktor{G}{N}\to G$ sends each element of
            the quotient to a chosen representative. It is proper since
            the preimage of any finite set or finite union of cosets
            in $G$ is finite. Bornologicity follows from the definition
            of $\rho_G$.

      \item Since $\pi\circ i=\operatorname{id}$, closeness to the
            identity requires no verification for this composition.
            For $i\circ\pi$: the set
            \[
                \{(i\circ\pi(g))^{-1}g\mid g\in G\}
            \]
            belongs to $\langle N\rangle$, since $i\circ\pi(g)$ lies
            in the same coset as $g$, so $(i\circ\pi(g))^{-1}g\in N$
            for every $g\in G$.\qedhere
  \end{itemize}
\end{proof}

\begin{definition}\label{def-genernormal}
  Metrics defined by formula~\eqref{eq-factormetric} are called
  \emph{metrics realized via the normal subgroup~$N$}.
\end{definition}

We now turn to examples of improper metrics on $\mathbb{Z}$. Every
subgroup of $\mathbb{Z}$ is normal and has the form
\[
    k\mathbb{Z}:=\{kn\mid n\in\mathbb{Z}\}.
\]

\begin{exmpl}
  All metrics on $\mathbb{Z}$ realized via subgroups $k\mathbb{Z}$
  are not coarsely equivalent to any proper left-invariant metric on
  this group.
\end{exmpl}

\begin{proof}
  \begin{itemize}
      \item The bornologies $\langle k\mathbb{Z}\rangle$ are countable
            and, by Corollary~\ref{критерий метризуемости борнологии},
            metrizable. Denote the corresponding metric by $\rho_k$.

      \item Suppose $\rho_k$ is proper. By the uniqueness of the
            proper metric (Theorem~\ref{Smith}) we have
            $(\mathbb{Z},\rho_k)\stackrel{c}{\sim}(\mathbb{Z},d)$,
            where $d(x,y)=|x-y|$.

      \item By Proposition~\ref{реализуемость метрики на подгруппе},
            $(\mathbb{Z},\rho_k)\stackrel{c}{\sim}
            (\faktor{\mathbb{Z}}{k\mathbb{Z}},\mathfrak{B}_{\min})$.
            Hence $(\mathbb{Z},d)\stackrel{c}{\sim}
            (\faktor{\mathbb{Z}}{k\mathbb{Z}},\mathfrak{B}_{\min})$,
            which is impossible: it would imply that $\mathbb{Z}$ is
            bounded with respect to $d$, contradicting properness.

      This contradiction shows that $\rho_k$ is not coarsely
      equivalent to any proper left-invariant metric.\qedhere
  \end{itemize}
\end{proof}

The proof uses no specific properties of $\mathbb{Z}$ beyond finite
generation, infiniteness, and the description of its normal subgroups
of finite index. Thus we have in fact produced a class of improper
left-invariant metrics for all infinite finitely generated groups
admitting a normal subgroup of finite index, and we have proved the
following theorem.

\begin{theorem}\label{th-critnonequiv}
  Let $G$ be an infinite finitely generated group and let $d$ be a
  proper left-invariant metric on $G$. If $N$ is a normal subgroup
  of finite index and $\rho_{\faktor{G}{N}}$ is the word metric on
  the quotient group, then $(G,d)$ is not coarsely equivalent to
  $\bigl(\faktor{G}{N},\rho_{\faktor{G}{N}}\bigr)$.
\end{theorem}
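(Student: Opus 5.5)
The plan is to show that a coarse equivalence would force the unbounded space $(G,d)$ to be bounded. The quotient $\faktor{G}{N}$ is a finite group, so with its word metric it is a bounded metric space. Every subset of it has finite diameter. Its bounded coarse structure therefore contains $\faktor{G}{N}\times\faktor{G}{N}$, and the whole quotient is coarsely bounded.

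Suppose, for contradiction, that there are coarse maps
\[
  f\colon(G,\mathcal{E}_d)\to\bigl(\faktor{G}{N},\mathcal{E}_{\rho_{\faktor{G}{N}}}\bigr),\qquad
  h\colon\bigl(\faktor{G}{N},\mathcal{E}_{\rho_{\faktor{G}{N}}}\bigr)\to(G,\mathcal{E}_d)
\]
as in Definition~\ref{def-coarse-equiv}. The argument then has three steps.

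\textbf{Properness.} Since $f$ is proper and $\faktor{G}{N}$ is coarsely bounded, the set $f^{-1}(\faktor{G}{N})=G$ is coarsely bounded in $(G,\mathcal{E}_d)$. This says $\operatorname{diam}_d(G)<\infty$. One can reach the same conclusion through closeness instead: $h$ has finite image, so $h\circ f$ takes finitely many values. Since $h\circ f$ is close to $\operatorname{id}_G$, every $g\in G$ lies within a uniform distance $C$ of this finite set, and again $G$ has finite $d$-diameter.

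\textbf{Contradiction with properness of $d$.} The metric $d$ is proper, so the ball of radius $\operatorname{diam}_d(G)$ about $e_G$ is finite. That ball is all of $G$, so $G$ is finite, contradicting the assumption that $G$ is infinite.

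\textbf{Main obstacle.} The only delicate point is bookkeeping about which coarse structure on the quotient is meant. The statement uses the word metric on $\faktor{G}{N}$. Proposition~\ref{реализуемость метрики на подгруппе} identifies this, up to coarse equivalence, with $(G,\langle N\rangle)$. Since $N$ has finite index, $\langle N\rangle=2^G$ there, so that structure is bounded as well. Either way one only needs that the target space is coarsely bounded. This holds because a finite metric space has finite diameter, so I would state that explicitly and then carry out the properness step above.
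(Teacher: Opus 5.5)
Your argument is correct and is essentially the paper's. The paper gets the theorem from the example on $\mathbb{Z}$, whose decisive step is exactly that a coarse equivalence of $(G,d)$ with the finite quotient $\faktor{G}{N}$ would make $G$ bounded for $d$, contradicting properness of $d$ on an infinite group. You make explicit the step the paper leaves implicit, namely that properness of $f$ pulls the coarsely bounded set $\faktor{G}{N}$ back to all of $G$, and you rightly note that the paper's detour through Theorem~\ref{Smith} and the quotient-realization proposition is not needed for the statement as formulated.
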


\bigskip

Not every countable bornology is generated by normal subgroups. We
now give an example of a metrizable bornological structure on
$\mathbb{Z}$ that generates a metric $\rho$ not realizable via any
subgroup in the sense of Definition~\ref{def-genernormal}: there is
no subgroup whose quotient is coarsely equivalent to $(\mathbb{Z},\rho)$.

In the group $\mathbb{Z}$ choose the subset
\[
    A:=\{0,10,100,\ldots,10^k,\ldots\}.
\]

\begin{exmpl}
  There is no nontrivial subgroup $N\leq\mathbb{Z}$ such that
  $(\mathbb{Z},\langle A\rangle)\stackrel{c}{\sim}
  (\faktor{\mathbb{Z}}{N},\mathfrak{B}_{\min})$.
\end{exmpl}

\begin{proof}
  \begin{itemize}
      \item The bornology $\langle A\rangle$ is countably generated,
            since $\mathbb{Z}$ is abelian. Its basis consists of the
            sets $kA:=A+A+\cdots+A$ ($k$ summands), all integer
            translates of $A$, and all finite sets. Every bounded set
            in $\langle A\rangle$ is contained in a finite union of
            sets of this form. By
            Corollary~\ref{критерий метризуемости борнологии} we
            obtain a bornological metric $\rho$.

      \item Since every nontrivial subgroup $N\leq\mathbb{Z}$ gives a
            finite quotient $\faktor{\mathbb{Z}}{N}$, it suffices to
            show that $\langle A\rangle\neq 2^{\mathbb{Z}}$.

      \item The bornology equals $2^{\mathbb{Z}}$ if and only if it
            contains the set of even integers.

      \item The set $2\mathbb{N}$ cannot be expressed as a finite
            union of basis sets of $\langle A\rangle$: the set $kA$
            consists of multiples of $10$, and the gaps between
            consecutive elements of $A$ grow, so no finite collection
            of translates and finite sums involving $A$ covers all
            even integers. Hence $2\mathbb{N}\notin\langle A\rangle$,
            as required.\qedhere
  \end{itemize}
\end{proof}

This example generalizes to similarly defined sets. Choose a countable
family of subsets
\[
    A_j:=\{0,\,10+10j,\,(10+10j)^2,\,\ldots,\,(10+10j)^k,\,\ldots\}.
\]

\begin{exmpl}
  In the group $\mathbb{Z}$, the bornology
  $\langle A_j\rangle$, $j\in\mathbb{Z}$, is not realizable via any
  subgroup and generates an improper metric not coarsely equivalent
  to any proper left-invariant metric on $\mathbb{Z}$.
\end{exmpl}

Recall that a coarse structure is called \emph{monogenic} if it is
generated by a single set. Monogenicity is a coarse invariant
(see~\cite[Proposition~2.57]{Roe2003LecturesOC}) and is equivalent
to the coarse space being coarsely equivalent to a geodesic space.

In particular, the space $(\mathbb{Z},d)$ with $d(x,y)=|x-y|$ has
the monogenic bounded coarse structure $\mathcal{E}_d$ generated by
$E=\{(x,y)\mid d(x,y)\leq 1\}$.

\begin{proof}
  \begin{itemize}
      \item Non-realizability via a subgroup follows by the same
            argument as in the previous example: $\mathbb{Z}$ itself
            does not belong to this bornology, so it is not a bounded
            set.

      \item The bornology $\langle A_j\rangle$ generated by the
            entire family $\{A_j\}$ has a countable basis consisting
            of finite sums of the sets $A_i$ (with repetitions) and
            their translates. Hence it is countable and therefore
            metrizable.

      \item As noted above, $\mathcal{E}_d$ is monogenic. On the
            other hand, the left bornological coarse structure
            generated by $\langle A_j\rangle$ is not monogenic,
            since there are infinitely many indices $j$ for which
            $A_j$ cannot be obtained from the other $A_i$ by finitely
            many operations (see the proof of
            Proposition~\ref{связь}, where a basis of the coarse
            structure is constructed explicitly). Hence the metric
            generated by $\langle A_j\rangle$ is not coarsely
            equivalent to any proper left-invariant metric on
            $\mathbb{Z}$.\qedhere
  \end{itemize}
\end{proof}

Thus, even in elementary groups, improper left-invariant metrics can
generate the structure of a non-geodesic space.

\printbibliography

@inproceedings{Roe2003LecturesOC,
   title     = {Lectures on Coarse Geometry},
  author    = {Roe, John},
  publisher = {American Mathematical Society},
  series    = {University lecture series},
  year      =  {2003},
    url = {https://www.ams.org/books/ulect/031/},
}

@inproceedings{Tessera2022CoarseFP,
      title={Coarse fixed point properties}, 
      author={Romain Tessera and Jeroen Winkel},
      year={2022},
      eprint={2212.04900},
      archivePrefix={arXiv},
      primaryClass={math.GR},
      url={https://arxiv.org/abs/2212.04900}, 
}

@misc{brodskiy2006coarsestructuresgroupactions,
      title={Coarse structures and group actions}, 
      author={N. Brodskiy and J. Dydak and A. Mitra},
      year={2006},
      eprint={math/0607568},
      archivePrefix={arXiv},
      primaryClass={math.MG},
      url={https://arxiv.org/abs/math/0607568}, 
}

@Article{smith2006,
  author  = {Smith, J.},
  title   = {On asymptotic dimension of countable abelian groups},
  doi     = {10.1016/j.topol.2005.07.011},
  number  = {12},
  pages   = {2047--2054},
  url     = {https://arxiv.org/abs/math/0504447},
  volume  = {153},
  journal = {Topology Appl.},
  year    = {2006},
}

@misc{ma2021endslargescalegroups,
      title={Ends of large scale groups}, 
      author={Yuankui Ma and Hussain Rashed and Jerzy Dydak},
      year={2021},
      eprint={2109.10280},
      archivePrefix={arXiv},
      primaryClass={math.MG},
      url={https://arxiv.org/abs/2109.10280}, 
}

@misc{artigue2026,
      title={Metric-Independent Expansiveness}, 
      author={Alfonso Artigue and Luis Ferrari},
      year={2026},
      eprint={2603.20978},
      archivePrefix={arXiv},
      primaryClass={math.DS},
      url={https://arxiv.org/abs/2603.20978}, 
}

@article{Chung2021,
  title = {Structure and $K$-theory of $\ell^p$ uniform Roe algebras},
  volume = {15},
  ISSN = {1661-6960},
  url = {http://dx.doi.org/10.4171/JNCG/405},
  DOI = {10.4171/jncg/405},
  number = {2},
  journal = {Journal of Noncommutative Geometry},
  publisher = {European Mathematical Society - EMS - Publishing House GmbH},
  author = {Chung,  Yeong Chyuan and Li,  Kang},
  year = {2021},
  pages = {581–614}
}

@ARTICLE{Chung2024-wc,
  title     = "Quasi-countable inverse semigroups as metric spaces, and the
               uniform Roe algebras of locally finite inverse semigroups",
  author    = "Chung, Yeong Chyuan and Mart{\'\i}nez, Diego and Szak{\'a}cs,
               N{\'o}ra",
  journal   = "Groups Geom. Dyn.",
  publisher = "European Mathematical Society - EMS - Publishing House GmbH",
  volume    =  19,
  number    =  4,
  pages     = "1499--1543",
  month     =  jul,
  year      =  2024
}

@ARTICLE{Bell2008-dz,
  title     = "The asymptotic dimension of a curve graph is finite",
  author    = "Bell, Gregory C and Fujiwara, Koji",
  journal   = "J. Lond. Math. Soc. (2)",
  publisher = "Wiley",
  volume    =  77,
  number    =  1,
  pages     = "33--50",
  month     =  feb,
  year      =  2008,
  copyright = "http://onlinelibrary.wiley.com/termsAndConditions\#vor",
  language  = "en"
}

@ARTICLE{Bell2008-qg-1,
  title     = "Asymptotic dimension",
  author    = "Bell, G and Dranishnikov, A",
  journal   = "Topol. Appl.",
  publisher = "Elsevier BV",
  volume    =  155,
  number    =  12,
  pages     = "1265--1296",
  month     =  jun,
  year      =  2008,
  copyright = "https://www.elsevier.com/open-access/userlicense/1.0/",
  language  = "en"
}

\end{document}